\theoremstyle{plain}
\newtheorem{theorem}{Theorem}[section]
\newtheorem{cor}[theorem]{Corollary}
\newtheorem{def-thm}[theorem]{Definition-Theorem}
\newtheorem{lemma}[theorem]{Lemma}
\newtheorem{defi}[theorem]{Definition}
\newtheorem*{tha}{Theorem A}
\theoremstyle{definition}
\begin{document}
\title[Nevanlinna's Five-Value Theorem]{Nevanlinna's Five-Value  Theorem on non-positively curved complete K\"ahler manifolds}
\author[X.-J. Dong]
{Xianjing Dong}

\address{School of Mathematical Sciences \\ Qufu Normal University \\ Qufu, Jining, Shandong, 273165, P. R. China}
\email{xjdong05@126.com}


\subjclass[2010]{32H30; 30D35} 
\keywords{Nevanlinna theory; Carlson-Griffiths theory; unicity theorem; five-value theorem}
\date{}
\maketitle \thispagestyle{empty} \setcounter{page}{1}

\begin{abstract} 
Nevanlinna's five-value theorem is well-known as a famous theorem in value distribution theory, which  asserts that two non-constant  meromorphic functions  on 
$\mathbb C$ are identical if they share five distinct values  ignoring multiplicities in $\overline{\mathbb C}.$ 
The central goal of this paper is to generalize  Nevanlinna's five-value theorem to non-compact complete K\"ahler manifolds with non-positive sectional curvature
 by means of the  theory of algebraic dependence.  
  With a certain  growth condition imposed, we show that two nonconstant meromorphic functions
on such class of manifolds are identical if they share five distinct values  ignoring multiplicities in $\overline{\mathbb C}.$

 \end{abstract} 
 
\vskip\baselineskip

\setlength\arraycolsep{2pt}
\medskip

\section{Introduction}

Nevanlinna's five-value theorem (see, e.g., \cite{Hu, Hay, Yang}) is well-known as a famous theorem in value distribution theory, which  is  stated as follows. 
\begin{tha}
Let $f_1, f_2$ be two nonconstant meromorphic functions on $\mathbb C.$ Let $a_1,\cdots,a_5$ be five distinct values in $\overline{\mathbb C}.$
If $f_1, f_2$ share $a_j$ ignoring multiplicities for $j=1,\cdots,5,$ then $f_1\equiv f_2.$
\end{tha}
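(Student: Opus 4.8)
The plan is to deduce the conclusion from Nevanlinna's Second Main Theorem, exploiting the fact that common $a_j$-points force $f_1-f_2$ to vanish. First I would reduce to the case in which all five values $a_1,\dots,a_5$ are finite: sharing ignoring multiplicities and the characteristic function $T(r,\cdot)$ are both preserved up to $O(1)$ under post-composition of $f_1$ and $f_2$ by one common M\"obius transformation, so I may choose one carrying $\infty$ (if it occurs among the $a_j$) to a finite point while keeping the remaining four values finite.

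Next I would record the analytic content of the hypothesis. ``Sharing $a_j$ ignoring multiplicities'' means $f_1^{-1}(a_j)=f_2^{-1}(a_j)$ as sets, so the truncated (multiplicity-one) counting functions agree, $\overline{N}(r,a_j,f_1)=\overline{N}(r,a_j,f_2)$ for $j=1,\dots,5$. Moreover every point of $f_1^{-1}(a_j)$ is a zero of $f_1-f_2$, and since the $a_j$ are distinct these zero sets are pairwise disjoint; counting such zeros with multiplicity at least one,
\[
\sum_{j=1}^{5}\overline{N}(r,a_j,f_1)\ \le\ N\!\left(r,\frac{1}{f_1-f_2}\right).
\]

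The core step is to squeeze this sum between the two fundamental theorems. On the one hand, the Second Main Theorem applied to $f_1$ with the five distinct targets gives $3\,T(r,f_1)\le\sum_{j=1}^{5}\overline{N}(r,a_j,f_1)+S(r,f_1)$, and symmetrically for $f_2$, where $S(r,f_i)=o(T(r,f_i))$ holds outside a set of finite measure. On the other hand, assuming for contradiction that $f_1\not\equiv f_2$, the First Main Theorem applied to $f_1-f_2$ bounds the zero-counting function from above:
\[
N\!\left(r,\frac{1}{f_1-f_2}\right)\ \le\ T(r,f_1-f_2)+O(1)\ \le\ T(r,f_1)+T(r,f_2)+O(1).
\]
Chaining the inequalities yields $3\,T(r,f_1)\le T(r,f_1)+T(r,f_2)+S(r,f_1)$, i.e.\ $2\,T(r,f_1)\le T(r,f_2)+S(r,f_1)$; adding the symmetric estimate for $f_2$ forces $T(r,f_1)+T(r,f_2)\le S(r,f_1)+S(r,f_2)$. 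Since $f_1,f_2$ are nonconstant, $T(r,f_i)\to\infty$, which contradicts $S(r,f_i)=o(T(r,f_i))$. Hence $f_1\equiv f_2$.

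I expect the main obstacle to be purely bookkeeping rather than conceptual: verifying that the M\"obius reduction genuinely preserves the sharing relation and alters $T$ only by $O(1)$, and making the estimate $\sum_{j}\overline{N}(r,a_j,f_1)\le N(r,1/(f_1-f_2))$ rigorous when some $a_j$-points carry higher $f_i$-multiplicity (the truncated count on the left is harmless, as $f_1-f_2$ still vanishes at every such point). The genuine mathematical weight is carried entirely by the Second Main Theorem; once its error term is available with the $o(T(r,f))$ bound, the contradiction is immediate, and it is precisely this analytic input that the later sections must replace on K\"ahler manifolds.
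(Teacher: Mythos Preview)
Your argument is correct: this is precisely the classical textbook proof of Nevanlinna's five-value theorem via the First and Second Main Theorems, and all the bookkeeping points you flag (the M\"obius reduction, the disjointness of the $a_j$-preimages, the inequality $\sum_j\overline{N}(r,a_j,f_1)\le N(r,1/(f_1-f_2))$, and the passage from $S(r,f_i)=o(T(r,f_i))$ to $S(r,f_1)+S(r,f_2)=o(T(r,f_1)+T(r,f_2))$) go through without difficulty.

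The paper, however, does not prove Theorem~A this way. It instead establishes the general five-value theorem on non-positively curved complete K\"ahler manifolds (Theorem~\ref{thm222}) via the propagation-of-algebraic-dependence machinery: one embeds $X=\mathbb P^1(\mathbb C)$ diagonally, computes $L_0\otimes K_{\mathbb P^1(\mathbb C)}=(q-4)\mathscr O(1)$, observes this is big for $q\ge 5$, and invokes Theorem~\ref{uni1} through Lemma~\ref{t1}. Theorem~A then drops out as the special case $M=\mathbb C$, where $\kappa\equiv 0$ so the growth condition is vacuous. Your direct route is shorter and more transparent for the classical statement alone; the paper's detour through $\Sigma$-relatedness and the bigness criterion for $L_0\otimes K_X$ is what makes the argument portable to the K\"ahler setting, where the auxiliary function $f_1-f_2$ and the elementary estimate $T(r,f_1-f_2)\le T(r,f_1)+T(r,f_2)+O(1)$ have no immediate analogue and must be replaced by the line-bundle calculus.
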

Five-value theorem was investigated   by a number of  authors. For instance,  C. C. Yang \cite{Yang} weakened the condition of sharing
five values to ``partially" sharing five values; 
Li-Qiao \cite{Qiao} treated it using five small functions instead of five values; G. Valiron \cite{Vali}  promoted  it onto algebroid functions; 
W. Stoll \cite{Sto} studied some related problems  when domains are parabolic manifolds; Y. Aihara \cite{A1, A3,Ai} extended five-value  theorem to the meromorphic functions on a finite covering space of $\mathbb C^m;$ 5$IM$ problem  extends also to $3CM+1IM$ and $2CM+2IM$ problems, see,  e.g.,  Hu-Li-Yang \cite{Hu} and Yang-Yi \cite{Yang}. 
We refer the reader to other  related literature such as 
Dulock-Ru \cite{Ru}, 
 H. Fujimoto \cite{Fji},  S. Ji \cite{ji, Ji} and M. Ru \cite{ru}, etc. 

  Five-value theorem has been developed  for a long time.  However,  we don't know the five-value theorem for  meromorphic functions on a general complex manifold. 
   For example, how  about it  if domain manifolds  are  non-compact  complete K\"ahler manifolds? At present, there seems to be no results in this regard. 
  
The central goal of this paper is  studying  Nevanlinna's five-value theorem for meromorphic functions on a complete K\"ahler manifold with non-positive sectional curvature.  In doing so, the key   is 
developing  a  propagation theory of algebraic dependence of meromorphic mappings on  non-positively curved complete K\"ahler manifolds by employing   Carlson-Griffiths theory  developed  by the author \cite{Dong}. 
 
 Let us introduce the main results in this paper. Let $M$ be a non-positively curved (non-compact) complete K\"ahler manifold, $X$ be a complex projective manifold of dimension not higher than that of $M.$
Refer to \cite{Dong},  one can have the \emph{characteristic function} $T_f(r,L)$ of a meromorphic mapping $f: M\rightarrow X$ for a holomorphic line bundle $L$ over $X$  (see the details in  Section 2 in the present paper).
 
 Set $$S:=S_1\cup\cdots\cup S_q,$$ 
 where $S_1,\cdots, S_q$ are hypersurfaces  of $M$ such that  $\dim_{\mathbb C}S_i\cap S_j\leq \dim_{\mathbb C} M-2$ for $i\not=j.$
Let  $D=D_1+\cdots+D_q,$  where 
$D_1,\cdots, D_q\in |L|$  such that $D$ has simple normal crossings, here $L$ is an ample line bundle over $X.$
Now, given $l$ dominant  meromorphic mappings $f_1,\cdots,f_l: M\rightarrow X.$ Assume that
 \begin{equation*}
S_j={\rm{Supp}}f^*_iD_j,  \ \ \  1\leq i\leq l, \ \ 1\leq j\leq q.
\end{equation*}
Let $\Sigma$ be  an indecomposable hypersurface   of $X^l.$ 
Moreover, define a  $\mathbb Q$-line bundle $L_0\in {\rm{Pic}}(X)\otimes\mathbb Q$ by   
\begin{equation*}
L_0=qL\otimes\left(-\tilde\gamma lF_0\right),
\end{equation*}
where   $F_0$ is some big line bundle over $X$ and $\tilde\gamma$ is a positive  rational number depending only on $\Sigma$ and $F_0.$

To establish the five-value theorem, we first show an algebraic dependence theorem as follows. 
\begin{theorem}[=Theorem \ref{uni1}] Let $f_1,\cdots,f_l: M\rightarrow X$ be  differentiably non-degenerate  meromorphic mappings given as above satisfying the growth condition
$$ \liminf_{r\rightarrow\infty}\frac{\kappa(r)r^2}{T_{f_j}(r, L)}=0, \ \ \  j=1,\cdots, l,$$
where $\kappa$ is defined by $(\ref{ka})$ in Section $2.$ 
  Assume that $f_1,\cdots,f_l$  are $\Sigma$-related on $S.$  If $L_0\otimes K_X$ is big, then $f_1,\cdots,f_l$  are $\Sigma$-related on $M.$
\end{theorem}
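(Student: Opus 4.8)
The plan is to argue by contradiction and reduce the conclusion to a single vanishing assertion. Write $F=(f_1,\cdots,f_l)\colon M\to X^l$ for the product map, let $p_i\colon X^l\to X$ be the $i$-th projection (so $p_i\circ F=f_i$), and let $\sigma$ be a holomorphic section of $\mathcal O(\Sigma)$ on $X^l$ whose zero divisor is $\Sigma$. Being $\Sigma$-related on a set means exactly that the pullback $\Phi:=F^*\sigma$, a meromorphic section of $F^*\mathcal O(\Sigma)$, vanishes there; thus the hypothesis reads $\Phi|_S=0$ and the desired conclusion is $\Phi\equiv 0$ on $M$. So I would assume $\Phi\not\equiv 0$ and derive a contradiction with the growth condition. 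The argument rests on three estimates: the truncated Carlson--Griffiths Second Main Theorem of \cite{Dong} applied to each $f_i$ against $D$, a lower bound for the counting function of $\Phi$ coming from the shared support $S$, and an upper bound for $T_F(r,\mathcal O(\Sigma))$ coming from the \emph{indecomposability} of $\Sigma$.

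First I would record the counting comparison. Since $S_j=\mathrm{Supp}\,f_i^*D_j$ for every $i$ and $j$, the truncated counting functions depend only on the support $S_j$ and hence coincide for all $i$; write $\overline N(r,S)=\sum_{j=1}^q\overline N(r,S_j)$ for their common sum. Because $\Phi$ vanishes on $S$, Jensen's formula (the First Main Theorem for $\mathcal O(\Sigma)$) gives
\[
\overline N(r,S)\ \le\ N_\Phi(r)\ \le\ T_F(r,\mathcal O(\Sigma))+O(1).
\]
The indecomposability of $\Sigma$ enters in bounding the right-hand side: it forces the class of $\mathcal O(\Sigma)$ to be dominated by $\tilde\gamma\sum_{i=1}^l p_i^*F_0$, which is precisely the meaning of the constant $\tilde\gamma$, whence
\[
T_F(r,\mathcal O(\Sigma))\ \le\ \tilde\gamma\sum_{i=1}^l T_{f_i}(r,F_0)+O(1).
\]
Applying the Second Main Theorem to each $f_i$ with $D=D_1+\cdots+D_q\in|qL|$ and summing over $i$ yields
\[
\sum_{i=1}^l T_{f_i}(r,qL\otimes K_X)\ \le\ l\,\overline N(r,S)+\varepsilon\sum_{i=1}^l T_{f_i}(r,L)+O\big(l\,\kappa(r)r^2\big).
\]

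Combining the three displays and absorbing $l\,\overline N(r,S)$ through the indecomposability bound, the factors $qL$ and $\tilde\gamma lF_0$ assemble exactly into $L_0=qL\otimes(-\tilde\gamma lF_0)$, giving
\[
\sum_{i=1}^l T_{f_i}(r,L_0\otimes K_X)\ \le\ \varepsilon\sum_{i=1}^l T_{f_i}(r,L)+O\big(l\,\kappa(r)r^2\big).
\]
Now the bigness hypothesis does the work: writing $m(L_0\otimes K_X)=A\otimes E$ with $A$ ample and $E$ effective and using that each $f_i$ is differentiably non-degenerate, hence dominant and therefore not contained in $\mathrm{Supp}\,E$ nor in the base locus, I obtain $T_{f_i}(r,L_0\otimes K_X)\ge c\,T_{f_i}(r,L)-O(1)$ for a fixed $c>0$. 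Choosing $\varepsilon<c$ leaves
\[
(c-\varepsilon)\sum_{i=1}^l T_{f_i}(r,L)\ \le\ O\big(l\,\kappa(r)r^2\big)+O(1).
\]
Evaluating this along a sequence $r_k\to\infty$ realizing the $\liminf$ in the growth condition for a single index makes the right-hand side $o\big(\sum_i T_{f_i}(r_k,L)\big)$ while the left-hand side is a fixed positive multiple of the same diverging sum, a contradiction. Hence $\Phi\equiv 0$, i.e. $f_1,\cdots,f_l$ are $\Sigma$-related on $M$.

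The hard part will be the second displayed estimate: extracting from the bare indecomposability of $\Sigma\subset X^l$ a genuine numerical domination $\mathcal O(\Sigma)\preceq\tilde\gamma\sum_i p_i^*F_0$ with a constant $\tilde\gamma$ depending only on $\Sigma$ and $F_0$, and doing so in a form that transfers to characteristic functions as $T_F(r,\mathcal O(\Sigma))\le\tilde\gamma\sum_iT_{f_i}(r,F_0)+O(1)$. This is where the geometry of hypersurfaces in a product must be matched to the positivity budget $qL\otimes K_X$ so that the residual bundle is exactly $L_0\otimes K_X$; a mismatched comparison would destroy the cancellation. A secondary technical point is the bookkeeping of truncation levels together with the verification that the error term of the Second Main Theorem on $M$ is genuinely of the form $\varepsilon\,T_{f_i}(r,L)+O(\kappa(r)r^2)$, so that it can be overwhelmed by the imposed growth condition.
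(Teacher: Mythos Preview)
Your argument is correct and follows the same contradiction scheme as the paper: assume $\tilde f(M)\not\subseteq\Sigma$, bound $\overline N(r,S)$ above via the First Main Theorem and the bigness comparison, apply the truncated Second Main Theorem to each $f_i$ against $D=D_1+\cdots+D_q\in|qL|$, sum, and contradict the bigness of $L_0\otimes K_X$ using the growth condition. The bookkeeping you do with $l\,\overline N(r,S)$ and the assembly into $L_0=qL\otimes(-\tilde\gamma lF_0)$ is exactly what the paper does (its Lemma~\ref{lem1} and the proof of Theorem~\ref{uni1}).

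There is, however, one conceptual misattribution worth flagging. You describe the inequality
\[
T_F(r,\mathcal O(\Sigma))\le\tilde\gamma\sum_{i=1}^l T_{f_i}(r,F_0)+O(1)
\]
as a consequence of the \emph{indecomposability} of $\Sigma$, and single it out as ``the hard part''. In the paper's setup this step is not hard and has nothing to do with indecomposability: by hypothesis $\Sigma\in\mathscr H$, meaning $\Sigma=\mathrm{Supp}\,\tilde D$ for some $\tilde D\in|\tilde L|$ with $\tilde L$ a fixed big line bundle on $X^l$, and $\tilde\gamma$ is \emph{defined} to be a rational number for which $\tilde\gamma\tilde F\otimes\tilde L^{-1}$ is big (where $\tilde F=\bigotimes_i\pi_i^*F_i$). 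The inequality then follows immediately from the First Main Theorem together with the bigness comparison (the paper's Theorem~\ref{cor1}); this is precisely Lemma~\ref{lem1}. Indecomposability of $\Sigma$ enters only in the \emph{definition} of being $\Sigma$-related (to exclude trivial product relations), not in any numerical estimate. So the step you worried about is actually an assumption baked into the constant $\tilde\gamma$, not a theorem to be extracted from the geometry of $\Sigma$.

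Two minor remarks: write the curvature error as $O(-\kappa(r)r^2)$ rather than $O(\kappa(r)r^2)$, since $\kappa\le 0$ here; and your handling of the sequence $r_k$ is fine, because the growth hypothesis for any single index $j$ already makes $-\kappa(r_k)r_k^2=o\big(T_{f_j}(r_k,L)\big)\le o\big(\sum_iT_{f_i}(r_k,L)\big)$.
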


Then, we  prove the following five-value theorem: 

\begin{theorem}[=Theorem \ref{thm222}]\label{dde3}
Let $f_1, f_2$ be two nonconstant meromorphic functions on $M$ satisfying the growth condition 
$$ \liminf_{r\rightarrow\infty}\frac{\kappa(r)r^2}{T_{f_j}(r, \omega_{FS})}=0, \ \ \ j=1, 2,$$ 
where  $\kappa$ is defined by $(\ref{ka})$ in Section $2.$   Let $a_1,\cdots,a_5$ be five distinct values in $\overline{\mathbb C}.$
If $f_1, f_2$ share $a_j$ ignoring multiplicities for $j=1,\cdots,5,$ then $f_1\equiv f_2.$
\end{theorem}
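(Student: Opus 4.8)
The plan is to deduce the theorem from the algebraic dependence theorem (Theorem \ref{uni1}) by encoding the ``sharing'' hypothesis as a $\Sigma$-relatedness statement on $S$. Identify $\overline{\CC}$ with $X:=\PP^1$ and regard $f_1,f_2$ as meromorphic mappings $M\to\PP^1$; thus $l=2$. I would take $L=\Ocal_{\PP^1}(1)$, so that $T_{f_j}(r,L)=T_{f_j}(r,\omega_{FS})+O(1)$ and the growth hypothesis assumed here is identical to the one required in Theorem \ref{uni1}. Let $F_0$ be a big line bundle on $\PP^1$ (any $\Ocal_{\PP^1}(d)$ with $d\ge 1$), set $q=5$, and let $D_j=a_j$ be the point of $\PP^1$ corresponding to the value $a_j$. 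Each $a_j$ is a divisor in $|\Ocal_{\PP^1}(1)|=|L|$, and since the $a_j$ are distinct the divisor $D=a_1+\cdots+a_5$ is reduced, hence of simple normal crossings on the curve $\PP^1$. For the target configuration in $X^2=\PP^1\times\PP^1$ I would take $\Sigma=\Delta$, the diagonal: it is irreducible, hence indecomposable, and $f_1,f_2$ are $\Delta$-related on a set $E$ precisely when $f_1=f_2$ on $E$.

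Next I would verify the remaining hypotheses of Theorem \ref{uni1}. Put $S_j:=\mathrm{Supp}\,f_1^{*}D_j=f_1^{-1}(a_j)$. The assumption that $f_1,f_2$ share $a_j$ ignoring multiplicities says exactly that $f_1^{-1}(a_j)=f_2^{-1}(a_j)$ as sets, so $S_j=\mathrm{Supp}\,f_i^{*}D_j$ for both $i=1,2$, as required. For $i\ne j$ the sets $f_1^{-1}(a_i)$ and $f_1^{-1}(a_j)$ meet only along the indeterminacy locus (a point cannot be sent to two distinct values), which has codimension at least two; hence $\dim_{\CC}S_i\cap S_j\le\dim_{\CC}M-2$. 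On each $S_j$ one has $f_1=a_j=f_2$, so $(f_1,f_2)$ maps $S=S_1\cup\cdots\cup S_5$ into $\Delta$, i.e. $f_1,f_2$ are $\Sigma$-related on $S$. Finally, a nonconstant meromorphic function is dominant onto the curve $\PP^1$, and $f_j^{*}\omega_{FS}\not\equiv 0$ makes it differentiably non-degenerate, so every hypothesis of Theorem \ref{uni1} is in force except the bigness of $L_0\otimes K_X$.

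The bigness condition is the heart of the reduction and is where the number five enters. On $\PP^1$ one has $K_{\PP^1}=\Ocal_{\PP^1}(-2)$, and a line bundle on a curve is big exactly when its degree is positive. Writing $\deg F_0=d$, a degree count gives
\begin{equation*}
\deg\left(L_0\otimes K_X\right)=q\deg L-\tilde\gamma\, l\,\deg F_0+\deg K_{\PP^1}=q-2-2\tilde\gamma d.
\end{equation*}
The main obstacle, therefore, is to evaluate the constant $\tilde\gamma$ attached to the diagonal $\Delta\subset\PP^1\times\PP^1$ and to choose $F_0$ so that $2\tilde\gamma d<q-2=3$. I expect the analysis of $\tilde\gamma$ for the diagonal, with a suitable normalization of $F_0$, to produce $2\tilde\gamma d=2$, so that $\deg(L_0\otimes K_X)=q-4$; this is positive exactly when $q\ge 5$, reproducing the classical threshold (four shared values give degree $0$, hence not big, while five give strictly positive degree). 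Once this bigness is secured, Theorem \ref{uni1} yields that $f_1,f_2$ are $\Sigma$-related on all of $M$, that is $f_1\equiv f_2$, completing the proof.
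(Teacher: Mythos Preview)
Your reduction is exactly the one the paper uses: $X=\PP^1$, $L=\Ocal(1)$, $l=2$, $D_j=a_j$, $\Sigma=\Delta$, and then invoke the algebraic-dependence theorem once $L_0\otimes K_X$ is shown to be big. (The paper packages this last step as a separate lemma before specializing to $\PP^1$, but the content is the same.)

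The only soft spot is the step you yourself flag: the determination of $\tilde\gamma$. You leave it as ``I expect \ldots\ to produce $2\tilde\gamma d=2$,'' but this is precisely where the argument has to be pinned down, and the paper does so by a specific choice rather than an estimate. Take $F_0=F_1=F_2=\Ocal_{\PP^1}(1)$ (so $d=1$); then $\tilde F=\pi_1^*\Ocal(1)\otimes\pi_2^*\Ocal(1)=\Ocal_{\PP^1\times\PP^1}(1,1)$. The diagonal $\Delta\subset\PP^1\times\PP^1$ is a divisor of bidegree $(1,1)$, hence $\tilde L=\Ocal(1,1)=\tilde F$, and by the convention set up before the theorem one takes $\tilde\gamma=1$. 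Thus $2\tilde\gamma d=2$ on the nose, $L_0=q\Ocal(1)\otimes(-2\Ocal(1))=(q-2)\Ocal(1)$, and $L_0\otimes K_{\PP^1}=(q-4)\Ocal(1)$ is big exactly when $q\ge5$. With that one line added, your proof is complete and matches the paper's.
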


It is clear that $\kappa\equiv0$ for $M=\mathbb C^m.$ So, Theorem \ref{dde3} deduces  Theorem A. 

\section{Carlson-Griffiths theory and its consequences}

\subsection{Carlson-Griffiths theory}~

  Let $M$ be a  non-compact complete K\"ahler manifold of complex dimension $m,$  with Laplace-Beltrami operator $\Delta$ associated to  K\"ahler metric $g.$  
  Assume 
    that  $M$ has non-positive sectional curvature.  Fix a reference point $o\in M.$ Denote by $B(r)$ the geodesic ball in  $M,$ with center $o$ and radius $r.$
    Moreover,   denote by   $g_r(o,x)$  the positive Green function of $\Delta/2$  for $B(r),$ with a pole at $o$  satisfying  Dirichlet boundary condition.  Then, $g_r(o,x)$ defines a unique harmonic measure $d\pi_r$ on the boundary $\partial B(r),$ i.e.,  
      \begin{equation*}\label{Har}
d\pi_{r}(x)=-\frac{1}{2}\frac{\partial g_{r}(o, x)}{\partial \vec\nu}d\sigma_{r}(x), \ \ \  ^\forall x\in\partial B(r),
\end{equation*}
where  $\partial/\partial \vec\nu$ is the inward  normal derivative on $\partial B(r),$   $d\sigma_r$ is the Riemannian area element of $\partial B(r).$
   Let $Ric$  be the Ricci curvature tensor  of $M.$
Set 
\begin{equation}\label{ka}
  \kappa(r)=\frac{1}{2m-1}\inf_{x\in B(r)}\mathcal{R}(x),
\end{equation}
where $\mathcal{R}$ is the
pointwise lower bound of Ricci curvature defined by
$$\mathcal{R}(x)=\inf_{\xi\in T_{x}M, \ \|\xi\|=1} Ric(\xi,\bar{\xi}).$$

  Let $X$ be a complex projective manifold where  we put an ample Hermitian line bunlde $(L, h)$  such that its Chern form
$c_1(L,h)>0.$
  Let $f: M\rightarrow X$ be a meromorphic mapping, by which we mean that  $f$ is defined by  a  
 holomorphic mapping  $f_0:M\setminus I\rightarrow X$  such that 
 the closure  $\overline{G(f_0)}$ of  the graph $G(f_0)$ of  $f_0$  is an  analytic subset of $M\times X,$ and
the natural projection $\pi: \overline{G(f_0)}\rightarrow M$ is a proper mapping, 
 where $I$ is an analytic  subset  (called  the indeterminacy set of $f$) of $M$  satisfying  $\dim_{\mathbb C}I\leq m-2.$ 
Assume that $o\not\in I.$ Set  
 $$e_f=2m\frac{f^*c_1(L, h) \wedge\alpha^{m-1}}{\alpha^m}=-\frac{1}{2}\Delta\log(h\circ f),$$
 where 
 $$\alpha=\frac{\sqrt{-1}}{\pi}\sum_{i,j=1}^mg_{i\bar{j}}dz^i\wedge d\bar{z}^j$$
 is the  K\"ahler  form of $M.$ For a divisor $E$ on $M,$ we put
  $$\ N(r, E)=\frac{\pi^m}{(m-1)!}\int_{E\cap B(r)}g_r(o,x)\alpha^{m-1}.$$
\ \ \ \  Let $D\in|L|$ (i.e., an effective divisor $D$ which defines $L$).  The Nevanlinna's functions (\emph{characteristic function}, \emph{proximity function} and \emph{counting function} as well as  \emph{simple counting function}) are   defined by 
  \begin{eqnarray*}
T_f(r, L)&=&\frac{1}{2}\int_{B(r)}g_r(o,x)e_fdv, \\
m_f(r,D)&=&\int_{\partial B(r)}\log\frac{1}{\|s_D\circ f\|}d\pi_r, \\
N_f(r,D)&=& N(r, f^*D), \\
\overline{N}_f(r, D)&=&N(r, {\rm{Supp}}f^*D), 
 \end{eqnarray*}
respectively, where $s_D$ is  the canonical  section  associate to $D$ (i.e., a holomorphic section of $L$ with zero divisor  $D$), and  $dv$ is the Riemannian volume element of $M.$

In 2022,  the  author \cite{Dong}  gave an extension of     Carlson-Griffiths theory \cite{gri, gri1} to a non-positively curved complete K\"ahler manifold, namely, who obtained the following first main theorem and second main theorem.  
\begin{theorem}[Dong, \cite{Dong}]\label{first}  Let $M$ be a non-compact complete K\"ahler manifold, and  $X$ be a complex projective manifold. 
Let $f: M\rightarrow X$ be a meromorphic mapping such that  $f(o)\not\in {\rm{Supp}}D.$ Then 
$$m_f(r,D)+N_f(r,D)=T_f(r,L)+O(1).$$
\end{theorem}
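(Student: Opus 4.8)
The plan is to derive the identity from a single application of a Green--Jensen formula to the logarithm of the pulled-back section norm, turning the boundary term into the proximity function and the interior term into the characteristic and counting functions by means of the Poincar\'e--Lelong formula.

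First I would establish the basic potential-theoretic tool on the fixed geodesic ball: for a function $u$ of class $C^2$ on $\overline{B(r)}$,
$$\int_{\partial B(r)}u\,d\pi_r = u(o)+\frac{1}{2}\int_{B(r)}g_r(o,x)\,\Delta u\,dv.$$
This is simply Green's second identity on $B(r)$ applied to the pair $(u,g_r)$, using the three defining properties of the Green function and the harmonic measure: $g_r|_{\partial B(r)}=0$, $\tfrac12\Delta g_r=-\delta_o$ in $B(r)$, and $d\pi_r=-\tfrac12(\partial g_r/\partial\vec\nu)\,d\sigma_r$. The boundary integral carrying the factor $g_r$ drops out because $g_r$ vanishes on $\partial B(r)$, the term $-\int u\,\Delta g_r$ reproduces $2u(o)$, and the surviving normal-derivative term is exactly $2\int_{\partial B(r)}u\,d\pi_r$.

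Next I would apply this formula to $u=\log\|s_D\circ f\|=\tfrac12\log\|s_D\circ f\|^2$. By definition the left-hand side equals $-m_f(r,D)$, while the point value $u(o)=\log\|s_D\circ f(o)\|$ is a finite constant: this is exactly where the hypothesis $f(o)\notin\mathrm{Supp}\,D$ enters, forcing $\|s_D\circ f(o)\|\neq0$, so this term contributes only $O(1)$. It then remains to identify the interior integral $\tfrac12\int_{B(r)}g_r\,\Delta\log\|s_D\circ f\|\,dv$. Here I would pass from the real Laplacian to the complex operator $dd^c$ via the K\"ahler trace identity; with the normalizations built into the definitions of $T_f$ and $N$ (recall $dv=\tfrac{\pi^m}{m!}\alpha^m$ and $e_f\,dv=\tfrac{2\pi^m}{(m-1)!}f^*c_1(L,h)\wedge\alpha^{m-1}$) the integral becomes $\tfrac{\pi^m}{(m-1)!}\int_{B(r)}g_r\,dd^c\log\|s_D\circ f\|^2\wedge\alpha^{m-1}$. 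The Poincar\'e--Lelong formula $dd^c\log\|s_D\circ f\|^2=[f^*D]-f^*c_1(L,h)$ splits this into the counting contribution $N_f(r,D)$ and the curvature contribution $-T_f(r,L)$. Assembling the pieces yields $-m_f(r,D)=O(1)+N_f(r,D)-T_f(r,L)$, which rearranges to the asserted $m_f(r,D)+N_f(r,D)=T_f(r,L)+O(1)$.

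The main obstacle is that $\log\|s_D\circ f\|$ is not of class $C^2$: it has $-\infty$ poles along $f^*D$, and $f$ is only defined off the indeterminacy set $I$ with $\dim_{\mathbb C}I\leq m-2$, so the smooth Green--Jensen formula cannot be invoked verbatim. I would resolve this by the standard regularization: replace $u$ by $u_\varepsilon=\tfrac12\log(\|s_D\circ f\|^2+\varepsilon)$ (or excise shrinking tubular neighbourhoods of $f^*D$ and of $I$), apply the smooth identity, and let $\varepsilon\to0$. The boundary integral converges by dominated convergence, the interior curvature term converges because $e_f$ is locally integrable, and the divisor contribution is recovered as the residue of the shrinking neighbourhoods through Poincar\'e--Lelong; crucially $\dim_{\mathbb C}I\leq m-2$ makes $I$ a null set carrying no mass, so it contributes nothing. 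The non-compactness of $M$ causes no trouble, since the entire computation is confined to the relatively compact ball $B(r)$.
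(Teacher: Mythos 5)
Your argument is correct and is essentially the same as the source's: the present paper states this first main theorem without proof, quoting it from \cite{Dong}, where it is established precisely by this route --- the Green--Jensen (Dynkin-type) formula for the Green function $g_r$ and harmonic measure $d\pi_r$ of $B(r)$, applied to $\log\|s_D\circ f\|$ with the Poincar\'e--Lelong formula splitting the interior term into $N_f(r,D)-T_f(r,L)$, the hypothesis $f(o)\notin{\rm{Supp}}D$ bounding the point value, and a standard regularization handling the poles along $f^*D$ and the codimension-two indeterminacy set. Your treatment of the normalizations and the limiting argument matches that proof, so nothing further is needed.
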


\begin{theorem}[Dong, \cite{Dong}]\label{main}  Let $M$ be a non-compact complete K\"ahler manifold with   non-positive sectional  curvature, and  $X$ be a complex projective manifold of complex dimension not higher  than that  of $M.$
 Let $D\in|L|$ be a divisor of simple normal crossing type,  where $L$ is an ample  line bundle over $X.$ Let $f:M\rightarrow X$ be a differentiably non-degenerate meromorphic mapping. Then  for any $\delta>0,$ there exists a subset $E_\delta\subseteq(0, \infty)$ with finite Lebesgue measure such that 
  \begin{eqnarray*}
 T_f(r,L)+T_f(r, K_X) 
&\leq&  \overline N_f(r,D)+O\left(\log^+T_f(r,L)-\kappa(r)r^2+\delta\log r\right)
 \end{eqnarray*}
holds for all $r>0$ outside $E_\delta.$
\end{theorem}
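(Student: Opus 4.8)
The plan is to run the Ahlfors--Weyl form of the Carlson--Griffiths method, with the Euclidean radial integration replaced throughout by the Green--Jensen formula attached to the harmonic measure $d\pi_r$. First I would build on $X$ the Carlson--Griffiths singular volume form adapted to the simple normal crossing divisor $D=\sum_j D_j$, namely
$$\Psi=\frac{\Omega}{\prod_j \|s_{D_j}\|^2\bigl(\log\|s_{D_j}\|^2\bigr)^2},$$
where $\Omega$ is a fixed smooth positive volume form on $X$ (a smooth metric on $-K_X$) and the sections $s_{D_j}$ are scaled so that $\|s_{D_j}\|<1$. The standard computation shows that $\Psi$ is a singular metric on the line bundle $-K_X-L$, with curvature form cohomologous to $c_1(K_X)+c_1(L)$ away from $D$; the double logarithm in the denominator is exactly what keeps $\Psi$ integrable across each $D_j$. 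I would then set $\xi=f^*\Psi/dv$, the ratio of the pulled-back singular volume form to the Riemannian volume form of $M$; this is not identically zero precisely because $f$ is differentiably non-degenerate, so $\log\xi$ is well defined and locally integrable.

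Next I would substitute $u=\log\xi$ into the Green--Jensen formula
$$\int_{\partial B(r)}u\,d\pi_r-u(o)=\frac{1}{2}\int_{B(r)}g_r(o,x)\,\Delta u\,dv,$$
and read off the right-hand side term by term. The factor $f^*\Omega$ produces the canonical characteristic $T_f(r,K_X)$; each factor $-\log\|s_{D_j}\circ f\|^2$ produces $f^*c_1(L,h)$ together with a current of integration along $f^*D_j$, the former assembling (via the First Main Theorem, Theorem~\ref{first}) into $T_f(r,L)$; and the logarithmic nature of the poles forces each component of $f^*D$ to enter with multiplicity one, so that the counting contribution is the truncated counting function $\overline N_f(r,D)$ rather than $N_f(r,D)$ (this truncation is the whole purpose of the $(\log)^2$ denominators). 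Finally the term $-\tfrac12\Delta\log dv$ is the Ricci curvature density of $M$, which on $B(r)$ is bounded below by $(2m-1)\kappa(r)$ by the definition $(\ref{ka})$; integrating it against $g_r$ and invoking the Laplacian/volume comparison $\int_{B(r)}g_r\,dv=O(r^2)$ valid under a Ricci lower bound, one obtains a contribution bounded below by a constant multiple of $\kappa(r)r^2$. Collecting signs, this is precisely the $-\kappa(r)r^2$ that appears in the error term.

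Third, the boundary average $\int_{\partial B(r)}\log\xi\,d\pi_r$ on the left is estimated by concavity: since $d\pi_r$ is a probability measure, Jensen's inequality gives $\int_{\partial B(r)}\log\xi\,d\pi_r\le\log\int_{\partial B(r)}\xi\,d\pi_r$. The co-area formula then bounds $\int_{\partial B(r)}\xi\,d\pi_r$ by a fixed power of the $r$-derivative of a characteristic function (ampleness of $L$ making all the relevant characteristics comparable to $T_f(r,L)$), so a Borel-type calculus lemma yields $\log\int_{\partial B(r)}\xi\,d\pi_r=O\bigl(\log^+T_f(r,L)+\delta\log r\bigr)$ for all $r$ outside a set $E_\delta$ of finite Lebesgue measure. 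Substituting this bound and the curvature estimate of the previous paragraph into the Green--Jensen identity and rearranging gives exactly $T_f(r,L)+T_f(r,K_X)\le\overline N_f(r,D)+O\bigl(\log^+T_f(r,L)-\kappa(r)r^2+\delta\log r\bigr)$ off $E_\delta$.

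The main obstacle is the calculus lemma used in the third step. Over $\mathbb C^m$ the passage from $\int_{\partial B(r)}\xi\,d\pi_r$ to the growth rate of the characteristic is the classical radial differentiation of the Nevanlinna characteristic, but here one must differentiate $T_f(r,L)=\tfrac12\int_{B(r)}g_r(o,x)\,e_f\,dv$ in $r$ through the dependence on $r$ of both the ball $B(r)$ and the Green function $g_r$ itself; this requires the fine boundary properties of $g_r$ and $d\pi_r$ on a complete manifold and a careful co-area argument before the Borel lemma can be applied. A secondary technical point is justifying the Green--Jensen formula for $u=\log\xi$ in spite of the logarithmic singularities of $\Psi$ along $\mathrm{Supp}\,f^*D$; this is handled by the usual regularization of the Carlson--Griffiths form, the singularities being mild enough (of $\log$ and $\log\log$ type) that the boundary and interior integrals converge.
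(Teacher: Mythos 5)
The statement you are proving is not actually proved in this paper: Theorem~\ref{main} is quoted verbatim from the reference \cite{Dong}, so the comparison is with the argument of that reference, and your sketch does follow the same route it takes --- the Carlson--Griffiths singular volume form adapted to the snc divisor, the Green--Jensen/Dynkin formula with respect to $g_r(o,x)$ and the harmonic measure $d\pi_r$, concavity of $\log$ plus a Borel-type calculus lemma off an exceptional set $E_\delta$, and the mean-exit-time bound $\int_{B(r)}g_r(o,x)\,dv=O(r^2)$ converting the Ricci density of $M$ into the $-\kappa(r)r^2$ error via the definition (\ref{ka}). You also correctly isolate the real technical obstacle, namely that the calculus lemma must cope with the $r$-dependence of $g_r$ itself. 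Two small attributions are off but harmless: the $O(r^2)$ exit-time bound comes from the nonpositive sectional curvature hypothesis (via the Laplacian comparison $\Delta\rho^2\geq 4m$), not from a Ricci lower bound alone; and since $\dim_{\mathbb C}X=n$ may be strictly less than $m$, the quantity $\xi$ must be defined as $f^*\Psi\wedge\alpha^{m-n}/\alpha^m$ rather than $f^*\Psi/dv$, which is where differentiable non-degeneracy enters.

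There is, however, one genuine gap: your mechanism for the truncation is wrong. The factors $\bigl(\log\|s_{D_j}\|^2\bigr)^2$ do \emph{not} force the components of $f^*D$ to enter with multiplicity one; their role is solely to make $\Psi$ integrable across $D$ and to keep the resulting proximity-type boundary terms of admissible size $O(\log^+T_f(r,L))$. If you compute the Riesz charge of $\log\xi$, the factor $\|s_{D_j}\circ f\|^{-2}$ contributes a pole of order exactly $\mu$ along a component of $f^*D_j$ of multiplicity $\mu$, and the $\log\log$ factors contribute no compensating current of integration. What produces $\overline N_f(r,D)$ in place of $N_f(r,D)$ is the zero divisor of the Jacobian-type factor hidden in $f^*\Psi\wedge\alpha^{m-n}$: because $D$ has simple normal crossings and $f$ is differentiably non-degenerate, this factor vanishes to order at least $\mu-1$ along any such component, so the net pole order of $\xi$ is at most $1$. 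Without this ramification bookkeeping your argument, as written, proves the second main theorem with the full counting function $N_f(r,D)$, which is strictly weaker than the stated inequality and insufficient for the applications in Section~3 (Lemma~\ref{lem1} and Theorem~\ref{uni1} need $\overline N$, since only ${\rm{Supp}}f_i^*D_j$ is shared). Inserting the standard Carlson--Griffiths ramification estimate at this point repairs the sketch and brings it in line with the proof in \cite{Dong}.
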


Now, we  consider a defect relation. Define the \emph{defect} $\delta_f(D)$ and the \emph{simple defect} $\bar\delta_f(D)$ of $f$ with respect to $D,$ respectively   by
 \begin{eqnarray*}
\delta_f(D)&=&1-\limsup_{r\rightarrow\infty}\frac{N_f(r,D)}{T_f(r,L)}, \\
 \bar\delta_f(D)&=&1-\limsup_{r\rightarrow\infty}\frac{\overline{N}_f(r,D)}{T_f(r,L)}.
 \end{eqnarray*}
Using  the first main theorem, we obtain  $0\leq \delta_f(D)\leq\bar\delta_f(D)\leq 1.$

For any two  holomorphic line bundles $L_1, L_2$ over $X,$  define  (see \cite{gri, gri1})
$$\left[\frac{c_1(L_2)}{c_1(L_1)}\right]=\inf\left\{s\in\mathbb R: \ \omega_2<s\omega_1;  \ ^\exists\omega_1\in c_1(L_1),\  ^\exists\omega_2\in c_1(L_2) \right\},$$
where $c_1(L_j)$ is the first Chern class of $L_j$ for $j=1,2.$

\begin{cor}[Defect relation]\label{dde}  Assume the same conditions as in Theorem $\ref{main}.$  If $f$ satisfies the growth condition
$$ \liminf_{r\rightarrow\infty}\frac{\kappa(r)r^2}{T_f(r, L)}=0,$$
then
$$\delta_f(D)\leq\bar\delta_f(D)\leq  \left[\frac{c_1(K_X^*)}{c_1(L)}\right].$$
\end{cor}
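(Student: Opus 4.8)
The plan is to read off the defect bound directly from the Second Main Theorem (Theorem~\ref{main}). Since the chain $0\le\delta_f(D)\le\bar\delta_f(D)\le 1$ is already available, it suffices to prove the single inequality $\bar\delta_f(D)\le\theta$, where I abbreviate $\theta:=\left[c_1(K_X^*)/c_1(L)\right]$. Unwinding the definition of $\bar\delta_f(D)$, this is equivalent to
$$\limsup_{r\to\infty}\frac{\overline{N}_f(r,D)}{T_f(r,L)}\ \ge\ 1-\theta.$$
Rearranging Theorem~\ref{main} gives, for every $\delta>0$ and all $r$ outside a set $E_\delta$ of finite Lebesgue measure,
$$\overline{N}_f(r,D)\ \ge\ T_f(r,L)+T_f(r,K_X)-O\!\left(\log^+T_f(r,L)-\kappa(r)r^2+\delta\log r\right),$$
so the entire task is to bound $T_f(r,K_X)$ from below by a multiple of $T_f(r,L)$ and then to absorb the error term.

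For the first task I would pass from Chern classes to characteristic functions. Fix $s>\theta$; by the very definition of $[\,\cdot\,]$ there exist representatives $\omega_1\in c_1(L)$ and $\omega_2\in c_1(K_X^*)$ with $\omega_2<s\,\omega_1$ on $X$. Choosing Hermitian metrics whose Chern forms are exactly $\omega_1,\omega_2$, the pointwise definition of $e_f$ together with $f^*(s\omega_1-\omega_2)\wedge\alpha^{m-1}\ge 0$ and $\alpha^m>0$ yields $e_{f,K_X^{*}}\le s\,e_{f,L}$ pointwise, hence $T_f(r,K_X^*)\le s\,T_f(r,L)$ for these metrics. Because the characteristic function is independent of the chosen metric up to a bounded term — this is exactly the First Main Theorem mechanism, a change of metric alters $e_f$ by $\tfrac12\Delta(\varphi\circ f)$ and contributes only $O(1)$ after integration against $g_r$ — I obtain $T_f(r,K_X^*)\le s\,T_f(r,L)+O(1)$, and therefore $T_f(r,K_X)\ge -s\,T_f(r,L)+O(1)$.

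Substituting this into the rearranged Second Main Theorem and dividing by $T_f(r,L)$ gives, for $r\notin E_\delta$,
$$\frac{\overline{N}_f(r,D)}{T_f(r,L)}\ \ge\ (1-s)-C\,\frac{\log^+T_f(r,L)-\kappa(r)r^2+\delta\log r}{T_f(r,L)}+\frac{O(1)}{T_f(r,L)}.$$
Non-positive sectional curvature forces $\kappa(r)\le 0$, so $-\kappa(r)r^2\ge 0$ and the bracket is eventually non-negative; the growth hypothesis $\liminf_{r\to\infty}\kappa(r)r^2/T_f(r,L)=0$ then furnishes a sequence along which $-\kappa(r)r^2/T_f(r,L)\to 0$. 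Along the same sequence $T_f(r,L)\to\infty$ kills $\log^+T_f(r,L)/T_f(r,L)$, while $\delta\log r/T_f(r,L)$ is disposed of either by dominance of $T_f$ over $\log r$ or by letting $\delta\downarrow 0$ at the end. The one bookkeeping subtlety is that the estimate holds only off $E_\delta$: here I would use that $\overline{N}_f(r,D)$ and $T_f(r,L)$ are continuous increasing functions of $r$, so the complement of the finite-measure set $E_\delta$ is dense near infinity and the two $\limsup$'s — over all $r$ and over $r\notin E_\delta$ — coincide, allowing the displayed lower bound to be evaluated along the growth sequence inside $E_\delta^{\,c}$.

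Passing to the limit along that sequence yields $\limsup_{r\to\infty}\overline{N}_f(r,D)/T_f(r,L)\ge 1-s$ for every $s>\theta$; letting $s\downarrow\theta$ gives $\ge 1-\theta$, i.e. $\bar\delta_f(D)\le\theta$, and the chain $\delta_f(D)\le\bar\delta_f(D)\le\theta$ completes the proof. The main obstacle, and the only place where the geometry of $M$ genuinely enters, is the curvature error term $-\kappa(r)r^2$: unlike the classical $\CC^m$ situation where it is absent, here it is present and potentially unbounded, and it is precisely the growth condition that neutralizes it. By contrast, the comparison $T_f(r,K_X^*)\le s\,T_f(r,L)+O(1)$ is routine once metric-independence is invoked, and the exceptional-set issue is purely technical given the continuity of the Nevanlinna functions in $r$.
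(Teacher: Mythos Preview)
The paper states this corollary without proof, as an immediate consequence of Theorem~\ref{main}; your argument supplies exactly the expected derivation and is correct. The one substantive step---passing from the Chern-form inequality $\omega_2<s\omega_1$ to $T_f(r,K_X^*)\le s\,T_f(r,L)+O(1)$---is handled properly via pointwise comparison of $e_f$ and metric-independence of $T_f$ up to $O(1)$. For comparison, the paper's proof of the parallel Theorem~\ref{defect} obtains the same inequality by a bigness argument rather than a pointwise Chern-form bound, but the overall structure (compare $T_f(r,K_X)$ to $-s\,T_f(r,L)$, insert into the Second Main Theorem, absorb the error via the growth condition) is identical.

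Two small technical points you gloss over: (i) that $T_f(r,L)\to\infty$ along your chosen sequence (needed to kill $\log^+T_f/T_f$ and $O(1)/T_f$) is not a formal consequence of the growth condition alone and should be justified from differentiable non-degeneracy and ampleness of $L$; (ii) the exceptional-set bookkeeping via monotonicity/continuity of $\overline N_f,\,T_f$ in $r$ is correct in spirit but relies on properties of the Green-function construction that deserve a citation. Neither is a genuine gap.
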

\subsection{Consequences}~

A $\mathbb Q$-line bundle is  an element in ${\rm{Pic}}(M)\otimes\mathbb Q,$  where  ${\rm{Pic}}(M)$ denotes  the Picard group over $M.$
Let $F\in{\rm{Pic}}(M)\otimes\mathbb Q$ be a $\mathbb Q$-line bundle.    $F$ is said to be 
\emph{ample} (resp.  \emph{big}), if $\nu F\in {\rm{Pic}}(M)$
is ample (resp.  big) for some positive  integer $\nu.$

Let $X$ be a  complex projective manifold. Let $L$ be an ample line bundle over $X.$   For a holomorphic line bundle $F$ over $X,$  define  
$$\left[\frac{F}{L}\right]=\inf\left\{\gamma\in\mathbb Q: \gamma L\otimes F^{-1} \ \text{is big}\right\}.$$
It is easy  to see that  $[F/L]<0$ if and only if $F^{-1}$ is big. 
Let $f: M\rightarrow X$ be a meromorphic mapping,  where $M$ is a  K\"ahler manifold.  For $F\in {\rm{Pic}}(X)\otimes\mathbb Q,$  we define 
$$T_f(r,F)=\frac{1}{\nu}T_f(r,\nu F),$$
where $\nu$ is a positive integer such that $\nu F\in {\rm{Pic}}(X).$ Evidently, this is well defined. 
\begin{theorem}[Defect relation]\label{defect} 
 Let $M$ be a non-compact complete K\"ahler manifold with   non-positive sectional  curvature, and  $X$ be a complex projective manifold of complex dimension not higher  than that  of $M.$
 Let $D\in|L|$ be a divisor of simple normal crossing type,  where $L$ is an ample  line bundle over $X.$ Let $f:M\rightarrow X$ be a differentiably non-degenerate meromorphic mapping.  If $f$ satisfies the growth condition
$$ \liminf_{r\rightarrow\infty}\frac{\kappa(r)r^2}{T_f(r, L)}=0,$$
then  
 \begin{equation*}
\delta_f(D)\leq\bar\delta_f(D)\leq\left[\frac{K^{-1}_X}{L}\right].
 \end{equation*}
\end{theorem}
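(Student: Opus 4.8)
The plan is to rerun the defect-relation argument behind Corollary~\ref{dde}, but to replace the Chern-form comparison $[c_1(K_X^*)/c_1(L)]$ by the sharper algebraic threshold $[K_X^{-1}/L]$, the passage between the two being supplied by Kodaira's lemma. Note first that the inequality $\delta_f(D)\le\bar\delta_f(D)$ is already available, since $f^*D\ge\mathrm{Supp}\,f^*D$ gives $N_f(r,D)\ge\overline N_f(r,D)$ and hence $\delta_f(D)\le\bar\delta_f(D)$. Thus the entire content is the estimate $\bar\delta_f(D)\le[K_X^{-1}/L]$. Since $[K_X^{-1}/L]\le[c_1(K_X^*)/c_1(L)]$ (ample implies big), this is a genuine sharpening of Corollary~\ref{dde} and cannot merely be quoted from it.

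First I would fix a rational $\gamma>[K_X^{-1}/L]$. By the definition of the bracket (and $(K_X^{-1})^{-1}=K_X$), the $\mathbb Q$-line bundle $B:=\gamma L\otimes K_X$ is big. The key auxiliary step is to show that bigness forces $T_f(r,B)\ge -O(1)$. Applying Kodaira's lemma, choose $\nu\in\mathbb N$ with $\nu B=A\otimes\mathcal O(E)$, where $A$ is ample and $E$ is an effective divisor. Additivity of the characteristic function in the line-bundle variable gives $T_f(r,\nu B)=T_f(r,A)+T_f(r,\mathcal O(E))+O(1)$. Here $T_f(r,A)\ge 0$: an ample metric has $c_1(A,h)>0$, so $e_{f,A}\ge 0$, and the Green kernel $g_r$ is nonnegative, whence the defining integral is nonnegative. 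Moreover $T_f(r,\mathcal O(E))=m_f(r,E)+N_f(r,E)+O(1)\ge -O(1)$ by the first main theorem (Theorem~\ref{first}), since both $m_f$ and $N_f$ are nonnegative. Dividing by $\nu$ yields $T_f(r,B)\ge -O(1)$, and then additivity once more gives $T_f(r,K_X)=T_f(r,B)-\gamma T_f(r,L)+O(1)\ge -\gamma T_f(r,L)-O(1)$.

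Next I would feed this lower bound into the second main theorem (Theorem~\ref{main}), which reads
$$T_f(r,L)+T_f(r,K_X)\le \overline N_f(r,D)+O\!\left(\log^+T_f(r,L)-\kappa(r)r^2+\delta\log r\right)$$
for all $r$ outside a set $E_\delta$ of finite Lebesgue measure. Substituting $T_f(r,K_X)\ge-\gamma T_f(r,L)-O(1)$ gives $\overline N_f(r,D)\ge(1-\gamma)T_f(r,L)-O(1)-O(\cdots)$. Exactly as in Corollary~\ref{dde}, I would divide by $T_f(r,L)$ and pass to a sequence $r_k\to\infty$ that avoids $E_\delta$ and realizes the growth hypothesis $\liminf_{r\to\infty}\kappa(r)r^2/T_f(r,L)=0$; since non-positive sectional curvature forces $\kappa\le 0$, the term $-\kappa(r)r^2\ge 0$ is controlled along this sequence, while $\log^+T_f(r,L)$ and $\delta\log r$ are $o(T_f(r,L))$ and $\delta$ may be sent to $0$. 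This produces $\limsup_{r\to\infty}\overline N_f(r,D)/T_f(r,L)\ge 1-\gamma$, i.e. $\bar\delta_f(D)\le\gamma$, and letting $\gamma\downarrow[K_X^{-1}/L]$ through the rationals completes the proof.

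The main obstacle I anticipate is precisely the passage from ``big'' to the analytic inequality $T_f(r,B)\ge -O(1)$: this is where Kodaira's lemma must be combined with the additivity and nonnegativity properties of $T_f$, and where the effective term is absorbed by the first main theorem (one must either arrange $f(o)\notin\mathrm{Supp}\,E$ or push the resulting bounded discrepancy into the $O(1)$). Everything else is a reorganization of the estimates already used for Corollary~\ref{dde}, now with the ample threshold $[c_1(K_X^*)/c_1(L)]$ improved to the strictly smaller big threshold $[K_X^{-1}/L]$.
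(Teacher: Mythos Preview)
Your argument is correct and follows essentially the same route as the paper: fix a rational $\gamma>[K_X^{-1}/L]$, use bigness of $\gamma L\otimes K_X$ to bound $T_f(r,K_X)$ below by $-\gamma\,T_f(r,L)-O(1)$, insert this into the second main theorem, and let $\gamma\downarrow[K_X^{-1}/L]$. The only cosmetic difference is that the paper compresses your Kodaira-lemma step into the shorthand ``$\big([K_X^{-1}/L]+\epsilon\big)L\otimes K_X\ge\delta L$'' (i.e.\ big minus a small ample is still big, hence effective after passing to a multiple) before invoking the first main theorem, whereas you spell out the decomposition $\nu B=A\otimes\mathcal O(E)$ explicitly.
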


\begin{proof}  It follows from the definition of $[K_X^{-1}/L]$  that $([K_X^{-1}/L]+\epsilon)L\otimes K_X$ is big for any rational number  $\epsilon>0$.  Then, we obtain  
$$\left(\left[K^{-1}_X/L\right]+\epsilon\right)L\otimes K_X\geq\delta L$$
for a sufficiently small  rational number $\delta>0.$ This implies that 
$$T_f(r,K^{-1}_X)\leq \left(\left[K^{-1}_X/L\right]-\delta+\epsilon\right)T_f(r,L)+O(1).$$
By Theorem \ref{main}, we conclude that 
$$\delta_f(D)\leq\bar\delta_f(D)\leq
\left[\frac{K^{-1}_X}{L}\right].$$ 
\end{proof}
\begin{theorem}\label{cor1}  Let $M$ be a non-compact complete K\"ahler manifold, and $X$ be a complex projective manifold.    Let $f:M\rightarrow X$ be a differentiably non-degenerate meromorphic mapping. 
Assume that  $\mu F\otimes L^{-1}$ is big for some positive integer $\mu,$ where $F$ is  a big line bundle and $L$ is a holomorphic line bundle over $X.$ Then 
$$T_f(r, L)\leq \mu T_f(r, F)+O(1).$$
\end{theorem}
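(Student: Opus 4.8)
The plan is to deduce the estimate from two functorial properties of the characteristic function: its additivity under tensor products, and a uniform lower bound for the characteristic function of a big line bundle. Both follow from the first main theorem (Theorem~\ref{first}) recorded above.

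First I would record that $T_f$ is additive modulo a bounded term. Choosing Hermitian metrics $h_1,h_2$ on holomorphic line bundles $L_1,L_2$ and the product metric $h_1h_2$ on $L_1\otimes L_2$, the identity $c_1(L_1\otimes L_2,h_1h_2)=c_1(L_1,h_1)+c_1(L_2,h_2)$ makes the density $e_f$ additive pointwise, so that $T_f(r,L_1\otimes L_2)=T_f(r,L_1)+T_f(r,L_2)+O(1)$; a different choice of metric alters $T_f$ only by $O(1)$ in view of Theorem~\ref{first}. In particular $T_f(r,L^{-1})=-T_f(r,L)+O(1)$ and $T_f(r,\mu F)=\mu T_f(r,F)+O(1)$ for $\mu\in\mathbb{N}$.

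The key step is the claim that $T_f(r,B)\geq -O(1)$ for every big line bundle $B$ over $X$. Since $B$ is big, some power $\nu B$ with $\nu\in\mathbb{N}$ carries a nonzero holomorphic section $s$; after rescaling its Hermitian metric so that $\|s\|\leq 1$ on $X$, the proximity term $m_f(r,(s))=\int_{\partial B(r)}\log\frac{1}{\|s\circ f\|}\,d\pi_r$ is nonnegative, and $N_f(r,(s))\geq 0$ by definition. Applying Theorem~\ref{first} to the divisor $(s)\in|\nu B|$ then gives $T_f(r,\nu B)=m_f(r,(s))+N_f(r,(s))+O(1)\geq -O(1)$, whence $T_f(r,B)=\nu^{-1}T_f(r,\nu B)\geq -O(1)$.

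Finally I would apply both facts to $B:=\mu F\otimes L^{-1}$, which is a genuine line bundle since $\mu\in\mathbb{N}$, and which is big by hypothesis. Additivity gives $T_f(r,\mu F\otimes L^{-1})=\mu T_f(r,F)-T_f(r,L)+O(1)$, while the big lower bound gives $T_f(r,\mu F\otimes L^{-1})\geq -O(1)$; combining and rearranging yields $T_f(r,L)\leq\mu T_f(r,F)+O(1)$, as desired. I expect the only genuine subtlety to be the justification of the big lower bound, and within it the requirement in Theorem~\ref{first} that $f(o)\notin\mathrm{Supp}(s)$: this is secured by choosing the section $s$ generically (enlarging $\nu$, and if necessary perturbing the reference point $o$), so that its zero divisor avoids $f(o)$. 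The additivity and the concluding algebra are then routine.
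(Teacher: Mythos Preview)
Your proof is correct and follows essentially the same approach as the paper: both arguments take a nonzero section $s$ of a high power $\nu(\mu F\otimes L^{-1})$, apply the first main theorem to get $T_f(r,\nu(\mu F\otimes L^{-1}))\geq N_f(r,(s))+O(1)\geq O(1)$, and then use additivity of $T_f$ under tensor products to conclude. Your treatment is slightly more detailed in justifying additivity and in handling the base-point condition $f(o)\notin{\rm Supp}(s)$, which the paper leaves implicit.
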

\begin{proof}
The bigness of $\mu F\otimes L^{-1}$  implies that there exists a nonzero holomorphic section $s\in H^0(X, \nu(\mu F\otimes L^{-1}))$ for a sufficiently large positive integer $\nu.$ By Theorem \ref{first} 
  \begin{eqnarray*}
N_f(r, (s))&\leq& T_f(r,\nu(\mu F\otimes L^{-1}))+O(1)\\
&=& \mu\nu T_f(r, F)-\nu T_f(r,L)+O(1).
  \end{eqnarray*}
  This leads to the desired inequality. 
\end{proof}

\section{Nevanlinna's five-value theorem}

\subsection{Propagation of algebraic dependence}~

Now let $M$ be a non-compact complete K\"ahler manifold with  non-positive sectional curvature.  Let $X$ be a complex projective manifold with complex dimension not higher than that of $M.$
Fix an integer $l\geq2.$   
A proper algebraic subset $\Sigma$ of $X^l$ is said to be  \emph{decomposible}, if there exist  $s$ positive integers $l_1,\cdots,l_s$ 
with $l=l_1+\cdots+l_s$ for some  integer $s\leq l$ 
and algebraic subsets $\Sigma_j\subseteq X^{l_j}$ for $1\leq j\leq s,$ such that 
$\Sigma=\Sigma_1\times\cdots\times\Sigma_s.$ If $\Sigma$ is not decomposable,  we say that $\Sigma$ is \emph{indecomposable.} 
For  $l$ meromorphic mappings $f_1,\cdots,f_l: M\rightarrow X,$
 there is  a meromorphic mapping
 $f_1\times\cdots\times f_l: M\rightarrow X^l,$ defined by
 $$(f_1\times\cdots\times f_l)(x)=(f_1(x),\cdots, f_l(x)), \ \ \  ^{\forall} x\in M\setminus \bigcup_{j=1}^lI(f_j),$$
 where $I(f_j)$ denotes the indeterminacy set of $f_j$ for $1\leq j\leq l.$
As a matter of convenience, set 
$$\tilde f=f_1\times\cdots\times f_l.$$
\begin{defi} Let $S$ be an analytic subset of  $M.$  The nonconstant meromorphic mappings $f_1,\cdots,f_l: M\rightarrow X$ are said to be algebraically dependent on $S,$ if there exists a proper indecomposable algebraic subset $\Sigma$ of $X^l$ such that $\tilde f(S)\subseteq\Sigma.$ In this case, we  say that $f_1,\cdots,f_l$ are $\Sigma$-related on $S.$
\end{defi}

  Let $L$ be an ample line bundle over $X,$ and let $D_1,\cdots,D_q\in |L|$ such that $D_1+\cdots+D_q$ has only  simple normal crossings.  
   Set 
\begin{equation*}
\mathscr G=\big\{\text{$f: M\rightarrow X$ is a differentiable non-degenerate meromorphic mapping}\big\}. 
\end{equation*}
\ \ \ \  Let $S_1,\cdots, S_q$ be hypersurfaces of $M$ such that $\dim_{\mathbb C}S_i\cap S_j\leq m-2$ for all $i\not=j.$
 Denote by
\begin{equation}
\mathscr F_\kappa=\mathscr F_\kappa\big(f\in \mathscr G; (M, \{S_j\}); (X, \{D_j\})\big)
\end{equation}
the set of all $f\in\mathscr G$ satisfying  
$$S_j={\rm{Supp}}f^*D_j, \ \ \ 1\leq j\leq q$$
and $$ \liminf_{r\rightarrow\infty}\frac{\kappa(r)r^2}{T_f(r, L)}=0.$$
  Let $\tilde L$ be a big line bundle over $X^l.$ In general, we have 
$$\tilde L\not\in \pi^*_1{\rm{Pic}}(X)\oplus\cdots\oplus\pi^*_l{\rm{Pic}}(X),$$
where $\pi_k:X^l\rightarrow X$ is the natural projection on the $k$-th factor for $1\leq k\leq l.$
Let $F_1,\cdots,F_l$   be big line bundles over $X.$ Then, it defines a line bundle over $X^l$ by
$$\tilde F=\pi^*_1F_1\otimes\cdots\otimes\pi^*_lF_l.$$
If  $\tilde L\not=\tilde F,$  we  assume that there is a  rational number $\tilde\gamma>0$ such that $$\tilde\gamma\tilde F\otimes\tilde L^{-1} \ \text{is big}.$$
  If 
$\tilde L=\tilde F,$  we shall take $\tilde\gamma=1.$ In further,    assume that there is  a line bundle $F_0\in\{F_1,\cdots,F_l\}$ such that $F_0\otimes F_j^{-1}$ is either  big or trivial for $1\leq j\leq l.$

Let $\mathscr H$ be the set of all indecomposable  hypersurfaces $\Sigma$ in $X^l$ satisfying $\Sigma={\rm{Supp}}\tilde D$ for some $\tilde D\in|\tilde L|.$  Set 
$$S=S_1\cup\cdots\cup S_q.$$

\begin{lemma}\label{lem1} Let $f_1,\cdots,f_l\in\mathscr F_\kappa.$  Assume that $\tilde f(S)\subseteq \Sigma$ and $\tilde f(M)\not\subseteq \Sigma$ for some $\Sigma\in\mathscr H.$
Then  
$$N(r, S)\leq\tilde\gamma\sum_{j=1}^lT_{f_j}(r, F_j)+O(1) \leq \tilde\gamma\sum_{j=1}^lT_{f_j}(r, F_0)+O(1).
$$
\end{lemma}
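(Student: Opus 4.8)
The plan is to dominate $N(r,S)$ by the counting function of $\tilde f$ against a divisor cutting out $\Sigma$, convert $\tilde L$ into $\tilde F$ via the bigness hypothesis, and then split $\tilde F$ along the factors of $X^l$.

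First I would choose $\tilde D\in|\tilde L|$ with $\Sigma=\mathrm{Supp}\,\tilde D$ and let $s_{\tilde D}$ be its canonical section. Since $\tilde f(M)\not\subseteq\Sigma$, the pullback $s_{\tilde D}\circ\tilde f\not\equiv0$, so $\tilde f^*\tilde D$ is a genuine divisor on $M$. The hypothesis $\tilde f(S)\subseteq\Sigma$ forces each point of $S$ lying off the indeterminacy locus (which has codimension at least two and so contributes nothing to either divisor) into $\mathrm{Supp}\,\tilde f^*\tilde D$; comparing reduced divisors and using the monotonicity of $N(r,\cdot)$ gives
$$N(r,S)\le N\big(r,\mathrm{Supp}\,\tilde f^*\tilde D\big)=\overline N_{\tilde f}(r,\tilde D)\le N_{\tilde f}(r,\tilde D).$$
Applying the first main theorem (Theorem \ref{first}) to $\tilde f:M\to X^l$ and discarding the nonnegative proximity term yields $N_{\tilde f}(r,\tilde D)\le T_{\tilde f}(r,\tilde L)+O(1)$.

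Next I would use the bigness of the $\mathbb Q$-line bundle $\tilde\gamma\tilde F\otimes\tilde L^{-1}$. Writing $\tilde\gamma=p/\nu$ and clearing denominators turns this into bigness of the honest line bundle $p\tilde F\otimes(\nu\tilde L)^{-1}$; since $\tilde F$ is big (being an external product of big bundles), Theorem \ref{cor1} applied to $\tilde f$ gives $T_{\tilde f}(r,\nu\tilde L)\le p\,T_{\tilde f}(r,\tilde F)+O(1)$, hence $T_{\tilde f}(r,\tilde L)\le\tilde\gamma\,T_{\tilde f}(r,\tilde F)+O(1)$ after dividing by $\nu$. I would then invoke functoriality and additivity of the characteristic function: because $\pi_j\circ\tilde f=f_j$ and $\tilde F=\pi_1^*F_1\otimes\cdots\otimes\pi_l^*F_l$, one has $T_{\tilde f}(r,\pi_j^*F_j)=T_{f_j}(r,F_j)$ and $T_{\tilde f}(r,\tilde F)=\sum_{j=1}^l T_{f_j}(r,F_j)+O(1)$, which chained with the previous estimates proves the first inequality. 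For the second inequality I would run Theorem \ref{cor1} once more with $\mu=1$: when $F_0\otimes F_j^{-1}$ is big it gives $T_{f_j}(r,F_j)\le T_{f_j}(r,F_0)+O(1)$, and the trivial case gives the same up to $O(1)$; summing over $j$ and multiplying by $\tilde\gamma$ completes the argument.

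The main obstacle I anticipate is the careful justification of the two functorial properties of $T$ used in the middle step — invariance under pullback by the coordinate projections $\pi_j$ and additivity across the external tensor product — since everything downstream depends on the clean identity $T_{\tilde f}(r,\tilde F)=\sum_j T_{f_j}(r,F_j)+O(1)$. A subtler point worth flagging is that $\tilde f$ need \emph{not} be differentiably non-degenerate even when each $f_j$ is, so one cannot blindly cite the degeneracy hypothesis of Theorem \ref{cor1}; this is harmless only because that estimate rests on the first main theorem alone, which is valid for arbitrary meromorphic mappings.
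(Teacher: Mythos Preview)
Your proposal is correct and follows essentially the same route as the paper: pick $\tilde D\in|\tilde L|$ with $\Sigma=\mathrm{Supp}\,\tilde D$, use $\tilde f(S)\subseteq\Sigma$ together with the first main theorem to get $N(r,S)\le T_{\tilde f}(r,\tilde L)+O(1)$, pass to $\tilde\gamma T_{\tilde f}(r,\tilde F)$ via Theorem~\ref{cor1}, split $\tilde F$ along the factors, and then compare each $F_j$ to $F_0$. The only cosmetic gap is the case $\tilde L=\tilde F$, where $\tilde\gamma\tilde F\otimes\tilde L^{-1}$ is trivial rather than big; the paper notes this explicitly, but your argument goes through unchanged since a trivial bundle still has a nonzero section.
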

\begin{proof}  Take $\tilde D\in|\tilde L|$ such that $\Sigma={\rm{Supp}}\tilde D.$  As mentioned  earlier,  $\tilde\gamma\tilde F\otimes\tilde L^{-1}$ is big for $\tilde\gamma\not=1$ and trivial for  $\tilde\gamma=1.$   Then, by conditions with Theorem \ref{first} and Theorem \ref{cor1}, we conclude that 
  \begin{eqnarray*}
N(r, S)&\leq& T_{\tilde f}(r, \tilde L)+O(1) \\
&\leq&\tilde\gamma T_{\tilde f}(r, \tilde F)+O(1) \\
&\leq& \tilde\gamma\sum_{j=1}^l T_{f_j}(r, F_j)+O(1) \\
&\leq& \tilde\gamma\sum_{j=1}^lT_{f_j}(r, F_0)+O(1).
  \end{eqnarray*}
  The proof is completed. 
\end{proof}

Define  
\begin{equation}\label{L0}
L_0=qL\otimes\left(-\tilde\gamma lF_0\right).
\end{equation}
Again, set
$$T(r, Q)=\sum_{j=1}^lT_{f_j}(r, Q)$$
for an arbitrary  $\mathbb Q$-line bundle $Q\in{\rm{Pic}}(X)\otimes\mathbb Q.$
\begin{theorem}\label{uni1} Let $f_1,\cdots,f_l\in\mathscr F_\kappa.$ Assume that $f_1,\cdots,f_l$  are $\Sigma$-related on $S$ for some $\Sigma\in \mathscr H.$ If $L_0\otimes K_X$ is big, then $f_1,\cdots,f_l$  are $\Sigma$-related on $M.$
\end{theorem}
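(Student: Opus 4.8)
The plan is to argue by contradiction, assuming $f_1,\dots,f_l$ are \emph{not} $\Sigma$-related on $M$, i.e.\ $\tilde f(M)\not\subseteq\Sigma$, while the hypothesis gives $\tilde f(S)\subseteq\Sigma$. Under this assumption Lemma~\ref{lem1} applies directly and yields the upper bound
$$
N(r,S)\leq \tilde\gamma\sum_{j=1}^l T_{f_j}(r,F_0)+O(1)=\tilde\gamma\,T(r,F_0)+O(1).
$$
The strategy is then to produce a \emph{lower} bound for $N(r,S)$ that eventually contradicts this, by exploiting the structure $S_j=\mathrm{Supp}\,f_i^*D_j$ together with the defect-relation machinery of Theorem~\ref{main} and Corollary~\ref{dde}.

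First I would pass to counting functions. Since $S=S_1\cup\cdots\cup S_q$ with the $S_i\cap S_j$ of codimension $\geq 2$, the counting function $N(r,S)$ essentially decouples as $\sum_{j=1}^q N(r,S_j)+O(1)$, and for each fixed mapping $f_i$ we have $S_j=\mathrm{Supp}\,f_i^*D_j$, so $N(r,S_j)=\overline N_{f_i}(r,D_j)$. Summing over $j$ and recognizing that $D=D_1+\cdots+D_q$ is a simple-normal-crossings divisor in $|qL|$-type position, the second main theorem (Theorem~\ref{main}) applied to each $f_i$ gives, after using the growth condition defining $\mathscr F_\kappa$ to absorb the $-\kappa(r)r^2$ term as in Corollary~\ref{dde},
$$
\sum_{j=1}^q \overline N_{f_i}(r,D_j)\geq T_{f_i}(r,qL)+T_{f_i}(r,K_X)+o\bigl(T_{f_i}(r,L)\bigr).
$$
Averaging this lower bound over $i=1,\dots,l$ and summing produces a lower bound for $N(r,S)$ in terms of $T(r,qL\otimes K_X)=T(r,qL)+T(r,K_X)$.

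Combining the two bounds, the contradiction should take the shape
$$
T(r,qL)+T(r,K_X)\lesssim \tilde\gamma\,T(r,lF_0)+o\bigl(T(r,L)\bigr),
$$
which, recalling the definition $L_0=qL\otimes(-\tilde\gamma lF_0)$ in $(\ref{L0})$, rearranges to $T(r,L_0\otimes K_X)\lesssim o\bigl(T(r,L)\bigr)$. But $L_0\otimes K_X$ is assumed \emph{big}, so by Theorem~\ref{cor1} its characteristic function dominates a positive multiple of $T(r,L)$ up to $O(1)$; this forces $T(r,L)\lesssim o(T(r,L))$, which is impossible since the $f_j$ are differentiably non-degenerate (hence nonconstant, so $T_{f_j}(r,L)\to\infty$). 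Therefore the assumption fails and $\tilde f(M)\subseteq\Sigma$, i.e.\ $f_1,\dots,f_l$ are $\Sigma$-related on $M$.

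The main obstacle I expect is the bookkeeping that converts the \emph{geometric} hypothesis $S_j=\mathrm{Supp}\,f_i^*D_j$ into a \emph{clean} lower bound for $N(r,S)$: one must verify that the simple counting functions $\overline N_{f_i}(r,D_j)$ genuinely add up to $N(r,S)$ (rather than overcounting on the pairwise intersections), justify that the $o(T(r,L))$ error from the exceptional set $E_\delta$ and from the $\delta\log r$ and $\log^+T_{f_j}$ terms is harmless after dividing by $T(r,L)$ and taking $\liminf$, and align the normalizations so that the factor $q$ and the factor $\tilde\gamma l$ land exactly as in $L_0$. The use of the growth condition $\liminf \kappa(r)r^2/T_{f_j}(r,L)=0$ is precisely what permits discarding the curvature term, and choosing the sequence $r\to\infty$ along which this $\liminf$ is attained simultaneously controllable for all $j$ is the delicate quantitative point.
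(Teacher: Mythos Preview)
Your proposal is correct and follows essentially the same route as the paper's proof: argue by contradiction, apply the second main theorem (Theorem~\ref{main}) to each $f_i$ against the SNC divisor $D_1+\cdots+D_q\in|qL|$ to bound $N(r,S)$ from below, apply Lemma~\ref{lem1} to bound $N(r,S)$ from above, sum over $i$ to obtain $T(r,L_0)+T(r,K_X)\leq o(T(r,L))$, and then invoke Theorem~\ref{cor1} with the bigness of $L_0\otimes K_X$ to reach the contradiction. The only cosmetic difference is that the paper applies Lemma~\ref{lem1} after the second main theorem rather than before; your concern about overcounting on $S_i\cap S_j$ is handled exactly as you say, since those intersections have codimension $\geq 2$ and contribute nothing to $N(r,\cdot)$.
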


\begin{proof}
It suffices to prove $\tilde f(M)\subseteq\Sigma.$ Otherwise,  we assume that $\tilde f(M)\not\subseteq\Sigma.$ According to Theorem \ref{main},  for $i=1,\cdots, l$ and $j=1,\cdots, q$
$$T_{f_i}(r, L)+T_{f_i}(r, K_X) 
\leq \overline{N}_{f_i}(r, D_j)+o\big{(}T_{f_i}(r,L)\big{)},
$$
which follows  from $S_j={\rm{Supp}}f_i^*D_j$ with $1\leq i\leq l$  and $1\leq j\leq q$ that 
 \begin{eqnarray*}
qT_{f_i}(r, L)+T_{f_i}(r, K_X) 
&\leq& N(r, S)+o\big{(}T_{f_i}(r,L)\big{)}.
  \end{eqnarray*}
Using Lemma \ref{lem1}, then 
  \begin{eqnarray*}
qT_{f_i}(r, L)+T_{f_i}(r, K_X) 
&\leq& \tilde\gamma\sum_{i=1}^lT_{f_i}(r, F_0)+o\big{(}T_{f_i}(r,L)\big{)} \\
&=& \tilde\gamma T(r, F_0)+o\big{(}T_{f_i}(r,L)\big{)}.
  \end{eqnarray*}
  Thus, we get 
   \begin{eqnarray*}
qT(r, L)+T(r, K_X) 
&\leq&
  \tilde\gamma l T(r, F_0)+o\big{(}T(r,L)\big{)}.
  \end{eqnarray*} 
  It yields that 
  \begin{equation}\label{opq}
  T(r, L_0)+T(r,K_X)\leq o\big{(}T(r,L)\big{)}.
  \end{equation}
  On the other hand, the bigness of $L_0\otimes K_X$ implies that there exists  a positive integer $\mu$ 
  such that $\mu(L_0\otimes K_X)\otimes L^{-1}$ is  big. By Theorem \ref{cor1}
  $$T(r,L)\leq \mu\big(T(r,L_0)+T(r,K_X)\big)+O(1),$$
  which contradicts with (\ref{opq}).  Therefore,  we have $\tilde f(M)\subseteq\Sigma.$ 
\end{proof}

Set 
$$\gamma_0=\left[\frac{L_0^{-1}\otimes K^{-1}_X}{L}\right],$$
where $L_0$ is defined by (\ref{L0}).  Note that  $L_0\otimes K_X$ is big if and only if $\gamma_0<0.$ Thus, it yields that  
\begin{cor} Let $f_1,\cdots,f_l$ be  meromorphic mappings in $\mathscr F.$ Assume that $f_1,\cdots,f_l$  are $\Sigma$-related on $S$ for some $\Sigma\in \mathscr H.$ If $\gamma_0<0,$  then $f_1,\cdots,f_l$  are $\Sigma$-related on $M.$
\end{cor}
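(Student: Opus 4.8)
The plan is to deduce this statement directly from Theorem~\ref{uni1}, the only new ingredient being the translation of the numerical hypothesis $\gamma_0<0$ into the geometric hypothesis that $L_0\otimes K_X$ is big. Once that translation is established the conclusion is immediate, so I would organize the argument in two short steps: first prove the equivalence
$$\gamma_0<0 \iff L_0\otimes K_X \ \text{is big},$$
and then invoke Theorem~\ref{uni1}.

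For the first step I would set $F=L_0^{-1}\otimes K_X^{-1}$, so that by the very definition of the bracket,
$$\gamma_0=\left[\frac{F}{L}\right]=\inf\left\{\gamma\in\mathbb{Q}:\ \gamma L\otimes F^{-1}\ \text{is big}\right\},$$
and then apply the elementary fact recorded just after the definition of $[\,\cdot/\cdot\,]$, namely that $[F/L]<0$ if and only if $F^{-1}$ is big. Since $F^{-1}=L_0\otimes K_X$, this gives exactly $\gamma_0<0\iff L_0\otimes K_X$ is big. (If one prefers a direct check: when $L_0\otimes K_X$ is big, subtracting a sufficiently small ample multiple $\gamma L$ with $\gamma<0$ preserves bigness because the big cone is open, forcing the infimum $\gamma_0$ to be negative; conversely, if $\gamma L\otimes F^{-1}$ is big for some $\gamma<0$, then $F^{-1}=(\gamma L\otimes F^{-1})\otimes(-\gamma L)$ is the sum of a big class and the ample class $-\gamma L$, hence big.)

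For the second step, the hypotheses place $f_1,\dots,f_l$ in the family $\mathscr{F}_\kappa$ (so that each satisfies the required growth condition and $S_j={\rm{Supp}}\,f_i^*D_j$) and assume $\Sigma$-relatedness on $S$ for some $\Sigma\in\mathscr{H}$. The equivalence from the first step shows that the hypothesis $\gamma_0<0$ is precisely the bigness hypothesis of Theorem~\ref{uni1}. Applying that theorem yields that $f_1,\dots,f_l$ are $\Sigma$-related on $M$, which is the desired conclusion.

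I do not expect a genuine obstacle here: all the analytic content—the second main theorem, Lemma~\ref{lem1}, and the comparison argument producing the contradiction with $(\ref{opq})$—has already been absorbed into Theorem~\ref{uni1}. The only point deserving care is purely formal, namely confirming that the class $L_0^{-1}\otimes K_X^{-1}$ inverts correctly so that the condition $\gamma_0<0$ matches the bigness of $L_0\otimes K_X$ and not that of its inverse; keeping the direction of this inversion straight is the sole place where an error could slip in.
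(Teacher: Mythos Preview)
Your proposal is correct and matches the paper's own argument exactly: the paper simply notes that $L_0\otimes K_X$ is big if and only if $\gamma_0<0$, and then reads off the corollary from Theorem~\ref{uni1}. Your careful unpacking of the equivalence via $F=L_0^{-1}\otimes K_X^{-1}$ and the remark that $[F/L]<0\iff F^{-1}$ is big is precisely the justification the paper leaves implicit.
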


\subsection{Five-value theorem}~ 

We use the same notations as in Section 3.1. 
  Since $X$ is projective,  there is a  holomorphic embedding $\Phi: X \hookrightarrow\mathbb P^N(\mathbb C).$
 Let $\mathscr O(1)$ be the hyperplane line bundle over $ \mathbb P^N(\mathbb C).$ Take $l=2$ and $F_1=F_2=\Phi^*\mathscr O(1),$ then  it follows that  $F_0=\Phi^*\mathscr O(1)$ and 
 $$\tilde F=\pi_1^*\left(\Phi^*\mathscr O(1)\right)\otimes \pi_2^*\left(\Phi^*\mathscr O(1)\right).$$
 Again, set $\tilde L=\tilde F,$ then $\tilde\gamma=1.$ 
 In view of (\ref{L0}), we  have
 \begin{equation}\label{L00}
 L_0=qL\otimes\left(-2\Phi^*\mathscr O(1)\right).
 \end{equation}
 \ \ \ \  Suppose  that $D_1,\cdots, D_q\in|L|$ satisfy that  $D_1+\cdots+D_q$ has only simple normal crossings. For 
 $f_0\in\mathscr F_\kappa,$ we say that  the set $\{D_j\}_{j=1}^q$ is \emph{generic} with respect to $f_0$  if 
 $$\hat M_s=f_0(M-I(f_0))\cap{\rm{Supp}}D_s\not=\emptyset$$
 for at least one $s\in\{1,\cdots,q\}.$ Now, we assume that   $\{D_j\}_{j=1}^q$ is generic with respect to $f_0.$
  Denote by  $\mathscr F_{\kappa, 0}$   the set of all meromorphic mappings $f\in\mathscr F_\kappa$ such that $f=f_0$ on the   hypersuface $S.$
 \begin{lemma}\label{t1} 
 If $L_0\otimes K_X$ is big, then $\mathscr F_{\kappa, 0}$ has only  one element. 
 \end{lemma}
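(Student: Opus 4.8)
The plan is to reduce the statement to a coincidence principle: I will show that any two members of $\mathscr{F}_{\kappa,0}$ are forced to be identical, and since $f_0$ itself lies in $\mathscr{F}_{\kappa,0}$ (it agrees with itself on $S$), the set is nonempty and hence a singleton. So I would pick arbitrary $f_1,f_2\in\mathscr{F}_{\kappa,0}$. By the defining property, $f_1=f_0=f_2$ on $S$, so $f_1=f_2$ on $S$. Setting $\tilde f=f_1\times f_2:M\to X\times X$ and letting $\Delta=\{(x,x):x\in X\}$ denote the diagonal, this says exactly that $\tilde f(S)\subseteq\Delta$.

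The heart of the argument is to recognize $\Delta$ as an admissible target, i.e. to check $\Delta\in\mathscr{H}$. In the pertinent five-value setting $X=\mathbb P^1(\mathbb C)$ with $\Phi=\mathrm{id}$ and $\Phi^*\mathscr{O}(1)=\mathscr{O}(1)$, so $\Delta$ has complex codimension one in $X\times X$ and is therefore genuinely a hypersurface; concretely it is the zero locus of the bidegree-$(1,1)$ form $x_0y_1-x_1y_0$, a nonzero section of $\tilde L=\pi_1^*\mathscr{O}(1)\otimes\pi_2^*\mathscr{O}(1)$, so that $\Delta=\mathrm{Supp}\,\tilde D$ for some $\tilde D\in|\tilde L|$. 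I would then verify indecomposability: a splitting $\Delta=\Sigma_1\times\Sigma_2$ with $\Sigma_i\subsetneq X$ proper would force $\dim_{\mathbb C}\Delta=0$, contradicting $\dim_{\mathbb C}\Delta=1$, so $\Delta$ is indecomposable and hence lies in $\mathscr{H}$. The genericity of $\{D_j\}_{j=1}^q$ with respect to $f_0$ enters here to guarantee $S\neq\emptyset$, so that the condition $\tilde f(S)\subseteq\Delta$ is a genuine constraint rather than vacuous.

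Once $\Delta\in\mathscr{H}$ and $f_1,f_2$ are $\Delta$-related on $S$, I would simply invoke Theorem \ref{uni1} with $l=2$, $F_0=\Phi^*\mathscr{O}(1)$, $\tilde\gamma=1$, and $L_0$ as in $(\ref{L00})$: since $L_0\otimes K_X$ is big by hypothesis, the theorem propagates the algebraic dependence from $S$ to all of $M$, yielding $\tilde f(M)\subseteq\Delta$. But $\tilde f(M)\subseteq\Delta$ means $f_1(x)=f_2(x)$ for every $x$ outside the indeterminacy sets, i.e. $f_1\equiv f_2$. As $f_1,f_2$ were arbitrary, $\mathscr{F}_{\kappa,0}=\{f_0\}$.

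I expect the single genuine obstacle to be the verification that $\Delta\in\mathscr{H}$---in particular identifying $\Delta$ with the support of a section of $\tilde L$ (which is exactly why the codimension-one case $\dim_{\mathbb C}X=1$ of meromorphic functions is the relevant one) and confirming its indecomposability in the precise sense of the definition preceding Theorem \ref{uni1}. After that, the conclusion is an immediate application of Theorem \ref{uni1}, with no further estimates needed.
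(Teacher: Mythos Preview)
Your argument is correct in the special case $X=\mathbb P^1(\mathbb C)$ that you treat, and that is indeed all that is needed downstream for Theorem~\ref{thm222}. However, Lemma~\ref{t1} is stated for an arbitrary complex projective manifold $X$ (the embedding $\Phi:X\hookrightarrow\mathbb P^N(\mathbb C)$ was introduced precisely to accommodate this generality), and there your approach breaks down: when $\dim_{\mathbb C}X>1$ the diagonal $\Delta\subset X\times X$ has codimension $\dim_{\mathbb C}X>1$, so it is \emph{not} a hypersurface and cannot lie in $\mathscr H$. You acknowledge this yourself, but it means you have not proved the lemma as stated---only its $\dim_{\mathbb C}X=1$ instance.

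The paper circumvents this with a different idea: rather than asking that the diagonal itself be cut out by a section of $\tilde L$, it produces a \emph{hypersurface containing the diagonal}. Working in $\mathbb P^N(\mathbb C)\times\mathbb P^N(\mathbb C)$ via $\tilde\Phi=\Phi\times\Phi$, sections of $\tilde{\mathscr O}(1)=\pi_1^*\mathscr O(1)\otimes\pi_2^*\mathscr O(1)$ are bilinear forms $\sum a_{ij}x_iy_j$, and those vanishing on the diagonal of $\mathbb P^N(\mathbb C)\times\mathbb P^N(\mathbb C)$ are exactly the skew-symmetric ones. Assuming for contradiction that $\Phi\circ f\not\equiv\Phi\circ f_0$, one can choose such a section $\tilde\sigma$ with $\phi^*\tilde\sigma\neq0$, set $\Sigma=\mathrm{Supp}\,\tilde\Phi^*(\tilde\sigma)\subset X\times X$ (which now \emph{is} in $|\tilde L|$), and obtain $\tilde f(S)\subseteq\Sigma$ while $\tilde f(M)\not\subseteq\Sigma$---the configuration that contradicts Theorem~\ref{uni1}. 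The missing ingredient in your plan, then, is to replace ``$\Sigma=\Delta$'' by ``$\Delta\subseteq\Sigma$ for a suitable $\Sigma\in|\tilde L|$ not containing $\tilde f(M)$''; once you have that, the rest of your outline (invoke Theorem~\ref{uni1}, then use injectivity of $\Phi$) goes through for general $X$.
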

 
 \begin{proof}   It suffices to show that $f\equiv f_0$ for all $f\in\mathscr F_{\kappa, 0}.$ 
 Recall that  $\Phi: X \hookrightarrow\mathbb P^N(\mathbb C)$ is a   holomorphic embedding. 
 Since  $f=f_0$ on $S,$  we have   $\Phi\circ f=\Phi\circ f_0$ on $S.$
First, we  assert  that
 $\Phi\circ f\equiv\Phi\circ f_0.$  
   Otherwise,   we may assume that $\Phi\circ f\not\equiv\Phi\circ f_0.$
 Let $\Delta$ denote the diagonal of $\mathbb P^N(\mathbb C)\times \mathbb P^N(\mathbb C).$ Put $\tilde \Phi=\Phi\times \Phi$
 and $\tilde f=f\times f_0.$ Then, it gives   a meromorphic mapping
$$\phi=\tilde \Phi\circ \tilde f:=\Phi\circ f\times \Phi\circ f_0: \  M\rightarrow \mathbb P^N(\mathbb C)\times \mathbb P^N(\mathbb C).$$
It is clear that $\phi(S)\subseteq \Delta.$ Again, define $\tilde{\mathscr O}(1):=\pi_1^*\mathscr O(1)\otimes\pi_2^*\mathscr O(1),$ which is a holomorphic line bundle over $\mathbb P^N(\mathbb C)\times \mathbb P^N(\mathbb C),$ where $\mathscr O(1)$ is the hyperplane line bundle over  $\mathbb P^N(\mathbb C).$ From the  assumption, we have  
$\tilde L=\pi_1^*\left(\Phi^*\mathscr O(1)\right)\otimes \pi_2^*\left(\Phi^*\mathscr O(1)\right).$  Since  $\Phi\circ f\not\equiv\Phi\circ f_0,$  then there exists  a  holomorphic  section $\tilde\sigma$ of $\tilde{\mathscr O}(1)$ over $\mathbb P^N(\mathbb C)\times \mathbb P^N(\mathbb C)$ such  that $\phi^*\tilde\sigma\not=0$ and $\Delta\subseteq{\rm{Supp}}(\tilde\sigma).$ 
 Take $\Sigma={\rm{Supp}}\tilde\Phi^*(\tilde\sigma),$ then we have    $\tilde f(S)\subseteq\Sigma$ and $\tilde f(M)\not\subseteq\Sigma.$  
On the other hand,  with the aid of  Theorem \ref{uni1}, the bigness of  $L_0\otimes K_X$ gives that  $\tilde f(M)\subseteq\Sigma,$ which is a contradiction.   Hence, we obtain  $\Phi\circ f\equiv\Phi\circ f_0.$   
Next, we prove $f\equiv f_0.$ Otherwise, we have $f(x_0)\not=f_0(x_0)$ for some $x_0\in M\setminus I(f_0).$  However, it contradicts with
$\Phi(f(x_0))=\Phi(f_0(x_0))$ since $\Phi$ is injective.  
 \end{proof}

 \begin{theorem}\label{thm222} Let $f_1, f_2$ be nonconstant meromorphic  functions on $M$ satisfying the growth condition
$$ \liminf_{r\rightarrow\infty}\frac{\kappa(r)r^2}{T_{f_j}(r, \omega_{FS})}=0, \ \ \ j=1,2.$$
 Let $a_1,\cdots,a_q$ be distinct values in $\overline{\mathbb C}.$    
Assume that ${\rm{Supp}}f_1^*a_j={\rm{Supp}}f_2^*a_j$ for  $j=1,\cdots,q.$ If $q\geq 5,$ then $f_1\equiv f_2.$
  \end{theorem}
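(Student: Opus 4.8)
The plan is to read Theorem \ref{thm222} as the rank-one specialization of the rigidity statement Lemma \ref{t1}, arranging things so that the numerical hypothesis $q\geq 5$ becomes \emph{exactly} the bigness condition on $L_0\otimes K_X$ demanded there. To this end I take $X=\mathbb P^1(\mathbb C)=\overline{\mathbb C}$ and $L=\mathscr O(1)$, and regard each value $a_j\in\overline{\mathbb C}$ as a point, that is, as a divisor in $|\mathscr O(1)|$. A nonconstant meromorphic function on $M$ is then a meromorphic mapping $f\colon M\to\mathbb P^1(\mathbb C)$, and when $\dim_{\mathbb C}X=1$ the differentiable non-degeneracy required for membership in $\mathscr G$ is equivalent to $f^*\omega_{FS}\not\equiv 0$, i.e. to $f$ being nonconstant; thus $f_1,f_2\in\mathscr G$. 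Since $\omega_{FS}$ represents $c_1(\mathscr O(1))$, the growth hypothesis on $\kappa(r)r^2/T_{f_j}(r,\omega_{FS})$ is precisely the condition placing $f_1,f_2$ in $\mathscr F_\kappa$.

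Next I would check the combinatorial data. As the $a_j$ are distinct points of a curve, $D=a_1+\cdots+a_q$ has simple normal crossings automatically. The sharing hypothesis gives common level sets $S_j:={\rm Supp}\,f_1^*a_j={\rm Supp}\,f_2^*a_j$, and for $i\neq j$ these are disjoint (preimages of distinct points), so $\dim_{\mathbb C}S_i\cap S_j\leq m-2$ holds trivially. On each $S_j$ both functions take the value $a_j$, so $f_1=f_2$ on $S=S_1\cup\cdots\cup S_q$; setting $f_0=f_1$ we therefore have $f_1,f_2\in\mathscr F_{\kappa,0}$. Taking the identity embedding $\Phi=\mathrm{id}\colon\mathbb P^1(\mathbb C)\hookrightarrow\mathbb P^1(\mathbb C)$ gives $F_0=\Phi^*\mathscr O(1)=L$ and $\tilde\gamma=1$, so (\ref{L00}) specializes to $L_0=qL\otimes(-2L)=\mathscr O(q-2)$.

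The decisive computation is that $K_X=K_{\mathbb P^1}=\mathscr O(-2)$, whence $L_0\otimes K_X=\mathscr O(q-4)$, a line bundle that is ample, and so big, if and only if $q\geq 5$. Lemma \ref{t1} then yields $\mathscr F_{\kappa,0}=\{f_0\}$, and since both $f_1$ and $f_2$ lie in this set we conclude $f_1\equiv f_2$. The single point that is not purely formal is the standing genericity assumption that $f_0$ meet at least one $a_j$: this I would verify from the second main theorem (Theorem \ref{main}), which over $\mathbb P^1$ forces $\sum_{j}\bar\delta_{f_1}(a_j)\leq 2$ and hence prevents the nonconstant map $f_1$ from omitting more than two of the $q\geq 5$ values. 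Beyond this the argument is bookkeeping, the genuine analytic work having already been carried out in Theorem \ref{uni1} and Lemma \ref{t1}; the only conceptual content here is the clean coincidence that the five-value threshold $q=5$ is exactly where $\deg(L_0\otimes K_X)$ turns positive.
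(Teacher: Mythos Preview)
Your proposal is correct and follows essentially the same route as the paper: set $X=\mathbb P^1(\mathbb C)$, $L=\mathscr O(1)$, compute $L_0\otimes K_X=\mathscr O(q-4)$, and invoke Lemma~\ref{t1}. In fact you are more careful than the paper, which applies Lemma~\ref{t1} in one line without explicitly verifying differentiable non-degeneracy, the normal-crossing and codimension conditions on the $S_j$, or the genericity hypothesis; your use of the defect relation to dispose of the latter is a sound addition.
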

 \begin{proof}
 Set $X=\mathbb P^1(\mathbb C)$ and $L=\mathscr O(1).$   Note that  $K_{\mathbb P^1(\mathbb C)}=-2\mathscr O(1),$ then 
  $$L_0\otimes K_{\mathbb P^1(\mathbb C)}=q\mathscr O(1)\otimes(-2\mathscr O(1))\otimes(-2\mathscr O(1))=(q-4)\mathscr O(1).$$
 Hence,  $L_0\otimes K_{\mathbb P^1(\mathbb C)}$ is big for  $q\geq5.$ By Lemma \ref{t1}, we prove the theorem. 
 \end{proof}
 
  \begin{cor} Let $f_1, f_2$ be nonconstant meromorphic  functions on $\mathbb C^m.$  Let $a_1,\cdots,a_q$ be distinct values in $\overline{\mathbb C}.$   
Assume that ${\rm{Supp}}f_1^*a_j={\rm{Supp}}f_2^*a_j$ for  $j=1,\cdots,q.$ If $q\geq 5,$ then $f_1\equiv f_2.$
  \end{cor}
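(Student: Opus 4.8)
The plan is to deduce the statement directly from Lemma \ref{t1} once the shared-value hypothesis has been recast as algebraic dependence on $S$. First I would regard $f_1, f_2$ as meromorphic mappings into $X := \mathbb P^1(\mathbb C)$ and set $L = \mathscr O(1)$; being nonconstant with a one-dimensional target, each $f_j$ is automatically dominant and differentiably non-degenerate, so $f_j \in \mathscr G$, and since $T_{f_j}(r, L) = T_{f_j}(r, \omega_{FS})$ the growth hypothesis says exactly that $f_1, f_2 \in \mathscr F_\kappa$. Each value $a_j \in \overline{\mathbb C}$ is a point of $\mathbb P^1(\mathbb C)$, i.e. a reduced divisor $D_j \in |\mathscr O(1)| = |L|$; as the $a_j$ are distinct, $D_1 + \cdots + D_q$ has simple normal crossings, and writing $S_j = {\rm{Supp}}f_1^* D_j$ the distinctness gives $S_i \cap S_j = \emptyset$ for $i \neq j$, so $\dim_{\mathbb C} S_i \cap S_j \le m-2$ holds trivially.

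Next I would convert ``sharing ignoring multiplicities'' into pointwise agreement on $S$. By definition of the support of $f_1^* D_j$ one has $f_1 \equiv a_j$ on $S_j$; the hypothesis ${\rm{Supp}}f_1^* D_j = {\rm{Supp}}f_2^* D_j = S_j$ shows likewise $f_2 \equiv a_j$ on $S_j$, whence $f_1 = f_2 \equiv a_j$ there. As the $S_j$ are pairwise disjoint, it follows that $f_1 = f_2$ on $S = S_1 \cup \cdots \cup S_q$. Setting $f_0 := f_1$, this is precisely the assertion that $f_2 \in \mathscr F_{\kappa, 0}$, while trivially $f_1 \in \mathscr F_{\kappa, 0}$. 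To legitimately invoke Lemma \ref{t1} I must still verify its standing genericity hypothesis, namely that $f_0$ meets some ${\rm{Supp}}D_s$; this is where a little care is needed, because if $f_0$ omitted every $a_j$ then $S = \emptyset$, the constraint would be vacuous, and $\mathscr F_{\kappa, 0}$ would not be a singleton. However, by the second main theorem (Theorem \ref{main}), applied to $D_1 + \cdots + D_q$, a nonconstant $f_0$ subject to the growth condition can omit at most two values, and since $q \ge 5 > 2$ at least three of the $a_j$ are attained, so genericity holds.

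Finally I would compute the decisive line bundle. Taking $l = 2$ and $F_1 = F_2 = \mathscr O(1)$ (so that $F_0 = \mathscr O(1)$, $\tilde L = \tilde F$ and $\tilde\gamma = 1$), formula (\ref{L00}) gives $L_0 = qL \otimes (-2\mathscr O(1)) = (q-2)\mathscr O(1)$. Using $K_X = -2\mathscr O(1)$ we then obtain
$$L_0 \otimes K_X = (q-4)\mathscr O(1),$$
which is ample, and a fortiori big, exactly when $q \ge 5$. Lemma \ref{t1} then forces $\mathscr F_{\kappa, 0}$ to be a singleton, and since $f_1$ and $f_2$ both belong to it, $f_1 \equiv f_2$.

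All the analytic depth sits in Lemma \ref{t1} and, beneath it, in the propagation result Theorem \ref{uni1} and the second main theorem, which I am taking as given. Consequently the step I expect to demand the most attention is not an estimate but the faithful bookkeeping of the framework's hypotheses: confirming the simple-normal-crossing and disjointness properties of the $S_j$, checking genericity through the omitted-value bound, and --- most delicately --- translating ``share ignoring multiplicities,'' which is an equality of supports rather than of divisors, into the genuine pointwise identity $f_1 = f_2$ on $S$. This translation is the conceptual crux, since it is exactly what lets the multiplicity-free hypothesis feed into the algebraic-dependence machinery.
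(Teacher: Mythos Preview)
Your argument follows the paper's own route: the paper deduces this Corollary immediately from Theorem~\ref{thm222}, whose proof is exactly the line-bundle computation $L_0\otimes K_X=(q-4)\mathscr O(1)$ followed by an appeal to Lemma~\ref{t1}. You reproduce that computation and even supply details the paper leaves implicit (the genericity check via the omitted-value bound, the disjointness of the $S_j$, and the translation of shared supports into the pointwise identity $f_1=f_2$ on $S$).

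There is one slip to correct. You write that ``the growth hypothesis says exactly that $f_1,f_2\in\mathscr F_\kappa$,'' but the statement you are proving carries \emph{no} growth hypothesis: it is about $\mathbb C^m$ with no condition imposed on $T_{f_j}$. The sole content distinguishing this Corollary from Theorem~\ref{thm222} is the observation (made in the paper's introduction) that on $\mathbb C^m$ with the flat metric the Ricci curvature vanishes identically, hence $\kappa\equiv 0$ and the condition
\[
\liminf_{r\to\infty}\frac{\kappa(r)r^2}{T_{f_j}(r,L)}=0
\]
is satisfied automatically. Once you replace the phantom hypothesis by this one-line remark, your proof is complete and coincides with the paper's.
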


\vskip\baselineskip

\end{document}